\title{Presentations for Singly-Cusped Bianchi Groups}
\author{Tanner Reese}
\newcommand\ab[1]{#1^{\mathrm{ab}}}
\newcommand\conj[1]{\overline{#1}}
\newtheorem{macthm}{Macbeath's Theorem}
\newtheorem{lem}{Lemma}
\newtheorem{defn}{Definition}
\newtheorem{thm}{Theorem}
\begin{document}
\maketitle

\begin{abstract}
We produce a complete list of group presentations for singly-cusped Bianchi groups,
$ PSL_2 (\mathcal{O}_d) $ where $ \mathcal{O}_d $ is the ring of integers for $ \mathbb{Q}(\sqrt{d}) $
and $ d $ is $ -1 $, $ -2 $, $ -3 $, $ -7 $, $ -11 $, $ -19 $, $ -43 $, $ -67 $, or $ -163 $.
To do this, we apply a theorem due to Macbeath to a sufficiently large horoball,
treating the Bianchi groups as discrete subgroups of isometries for $ \textbf{H}^3 $.
As far as we know, explicit presentations were not previously known when $ d $ is $ -43 $, $ -67 $, or $ -163 $.
\end{abstract}

\section{Introduction}

Hyperbolic spaces admit rather intricate groups of isometries.
Studying the geometry of these discrete subgroups
can give useful information about the algebraic structures underpinning these groups.
As a particular case, we can consider discrete subgroups of orientation-preserving isometries
of three-dimensional hyperbolic space, $ \textbf{H}^3 $, known as Kleinian groups.
The group of orientation-preserving isometries of $ \textbf{H}^3 $
can be identified with $ PSL_2 (\mathbb{C}) $ using M\"obius transformations.
The three-dimensional case is of special interest,
because of the role of hyperbolic 3-manifolds in studying 3-manifold topology.

When defining discrete subgroups of isometries, there are broadly two approaches.
One may consider the subgroup geometrically,
in which we view it as being generated by isometries of certain geometric types, such as reflections.
In the particular case of subgroups generated by reflections,
H.S.M. Coxeter classified these groups for spherical and Euclidean spaces of arbitrary dimension, see \cite{Cox}.
Though there are many important open questions about hyperbolic reflection groups,
the construction of these groups provides an immediate description of their group presentations and fundamental domains, see \cite{AVS}.

The other approach is to consider the group of isometries as a matrix group, in this case $ PSL_2 (\mathbb{C}) $.
Then, we can naturally construct subgroups by considering matrices
whose coefficients are algebraic integers of a particular number field.
Of special interest are those subgroups which have finite covolume and so form lattices.
Unlike geometrically constructed subgroups, these groups do not have easily determined group presentations.
The simplest example of these are those classical matrix groups with integer coefficients such as $ PSL_2(\mathbb{Z}) $.
The special linear group on $ \mathbb{Z} $ has been studied and presentations determined for all dimensions.
Steinberg determined presentations for $ SL_n(\mathbb{Z}) $ with $ n \geq 3 $ (see \cite{St}),
while $ SL_2(\mathbb{Z}) $ is classical.

The Bianchi groups arise by restricting the components to the algebraic integers over a complex quadratic number field.
Every complex quadratic number field can be represented as an extension $ \mathbb{Q}(\sqrt{d}) $
for some negative square free integer $ d $.
Then, its set of algebraic integers is the ring of integers $ \mathcal{O}_{\mathbb{Q}(\sqrt{d})} = \mathcal{O}_d $.
It is well known that $ \mathcal{O}_d = \mathbb{Z}[\omega] $ where
\[
\omega = \begin{cases}
\sqrt{d} & \text{if } d \equiv 2, 3 \pmod{4} \\
\frac{1 + \sqrt{d}}{2} & \text{if } d \equiv 1 \pmod{4}
\end{cases}.
\]
The Bianchi group $ \Gamma_d $ is defined as
\[ \Gamma_d = PSL_2 (\mathcal{O}_d) \]
When clear from context, we may omit $ d $ and use
$ \Gamma $ to refer to a Bianchi group and $ \mathcal{O} $ to refer to its ring of integers.
These are, in some sense, the simplest examples of isometry subgroups constructed in this way
besides those with integer coefficients.
Because these rings of integers are complex and quadratic, they can be viewed as lattices in the complex plane.
This simplifies their characterization in comparison to real number fields or those of higher degree.

The goal of this paper is to find presentations for singly cusped $ \Gamma_d $.
Because the number of cusps of $ \textbf{H}^3 / \Gamma_d $ is equal to the class number of $ \mathcal{O}_d $ (see \cite{MR}),
we need only consider those $ d $ for which $ \mathcal{O}_d $ is a principal ideal domain.
It was conjectured by Gauss and proven by Heegner (with modification by Stark)
that the only such $ d $ values are $ \{-1, -2, -3, -7, -11, -19, -43, -67, -163\} $, see \cite{Sta}.
To actually find a group presentation, we will consider the orbit of a horoball $ V $ under $ \Gamma_d $.
Assuming its orbit covers the space, we can use Macbeath's Theorem (see Theorem \ref{macthm}) to determine
a set of generators and relations from the horoballs and their intersections.
Since $ \textbf{H}^3 / \Gamma_d $ has only one cusp,
it is ensured that a single sufficiently large horoball will exist whose orbit covers the space.

The approach used here is adapted from the one presented by Mark and Paupert, see \cite{MP}.
It differs from the typical method of determining face-pairings of the fundamental domain.
Yasaki considers Vorono\"i polyhedra and their stabilizer subgroups to find presentations, see \cite{Yas}.
Page uses a method in which face-pairings are generated until a complete set is found, see \cite{Page}.
Unlike these methods, here a fundamental domain for the group is not determined directly
nor are a limited set of relations determined.
Instead, a large number of generators and relations are found
before using purely algebraic methods to simplify the presentation.

Ultimately, we have found presentations for the cases of $ d = -43, -67, -163 $
which along with the previously known cases of $ d = -1, -2, -3, -7, -11, -19 $
(discovered by Swan, see \cite{Sw}) completely describes the singly cusped Bianchi groups.
The abelianizations of the novel presentations were found to be
\[ \begin{split}
\ab{\Gamma_{-43}} &= C_\infty^2 \\
\ab{\Gamma_{-67}} &= C_\infty^3 \\
\ab{\Gamma_{-163}} &= C_\infty^7
\end{split} \]
These group presentations can be used to compute the homology of the quotient space.
Specifically, $ \ab{\Gamma_d} = H^1(\textbf{H}^3 / \Gamma_d, \mathbb{Z}) $.
Further, a better understanding of the group structure provides information about
a variety of topological information as demonstrated by \c{S}eng\"un, see \cite{Sen}.

The paper is structured as follows.
In section two, we review properties of hyperbolic three-space
and introduce the model of $ \textbf{H}^3 $ that will be used.
In section three, we present our method using Macbeath's Theorem
including implementation details for the algorithms.
In section four, we present the full group presentations.

\medskip
I would like to thank my advisor, Julien Paupert.
He introduced me to this method for determining group presentations,
advised me on its implementation,
and edited the various drafts of this paper.
I would also like to thank Nancy Childress for reviewing and editing.

\section{Hyperbolic Space} \label{hyp_space}

Hyperbolic spaces, $ \textbf{H}^n $, are simply connected Riemannian manifolds of constant negative curvature.
For a given dimension, such a space is unique up to scaling of the curvature.
As a general reference for information about hyperbolic manifolds, one can consult \emph{Foundations of Hyperbolic Manifolds}, see \cite{Rat}.
The work by Swan also provides an introduction to the topology of hyperbolic manifolds, see \cite{Sw}.
To study Bianchi groups, we are interested in the particular case of $ n = 3 $.
While several models exist for representing $ \textbf{H}^3 $ in a convenient form,
we will be using the Poincar\'e half-space model.
In this model,
\[
\textbf{H}^3 = \{(z, \lambda) \in \mathbb{C} \times \mathbb{R} \;:\; \lambda > 0 \}
\]
the boundary of which is represented by the $ \lambda = 0 $ plane along with the point at infinity,
$ \partial\textbf{H}^3 = (\mathbb{C} \times \{0\}) \cup \{\infty\} $.
Here, $ \infty $ represents the value of the limit $ \lim_{\lambda \to \infty} (z, \lambda) $ for any $ z \in \mathbb{C} $.
The Riemannian metric is defined as
\[ ds^2 = \frac{|dz|^2 + d\lambda^2}{\lambda^2} \]
So the distance between two points $ (z_1, \lambda_1), (z_2, \lambda_2) \in \textbf{H}^3 $ is
\[ d((z_1, \lambda_1), (z_2, \lambda_2)) = \mathrm{arccosh}\left(1 + \frac{|z_1 - z_2|^2 + (\lambda_1 - \lambda_2)^2}{2 \lambda_1 \lambda_2}\right) \]
The metric can also be used to define geodesics.
For this model, the geodesics are the semicircles whose center lies in the boundary plane
along with the vertical rays perpendicular to the boundary plane.
Similarly, the planes of the space are semispheres whose center lies in the boundary plane
and vertical planes perpendicular to the boundary plane.

As in Euclidean space, we can also define balls,
\[ B(c, r) = \{x \in \textbf{H}^3 \mid d(c, x) < r \} \]
which clearly will also be closed under isometries.
Further, in the half-space model, these hyperbolic balls will appear as Euclidean balls,
though not with the same centers or radii.
If one takes the limit of a ball as the radius approaches infinity,
while keeping a point with a tangent plane fixed,
then one obtains a \emph{horoball} based at some infinite point.
In the half-space model, every horoball is of one of the two following types.
In the first case, it is a ball tangent to and above the boundary plane,
\[
B = \{ (z, \lambda) \mid |z - z_0|^2 + (\lambda - \lambda_0)^2 < \lambda_0^2 \}
\]
and we say $ B $ is \emph{based} at $ z_0 \in \partial\textbf{H}^3 $.
In the second case, it is a half-space above and parallel to the boundary plane,
\[ B = \{ (z, \lambda) \mid \lambda > h \} \]
for some $ h > 0 $.
We say $ B $ is \emph{based} at $ \infty \in \partial\textbf{H}^3 $ with a \emph{height} of $ h $.
Note that with this definition of height, horoballs of smaller height are in fact larger.
Horoballs, like the geodesics, are a closed set under isometries.

With the metric, we can also consider \emph{isometries}.
That is mappings $ \sigma : \textbf{H}^3 \to \textbf{H}^3 $ which preserve the metric,
\[ d(\sigma(x), \sigma(y)) = d(x, y) \]
for all $ x, y \in \textbf{H}^3 $.
Because the metric is fixed, the sets of geodesics, planes, balls, and horoballs are all closed under isometries.
If an isometry does not cause a reflection of the space, then it is said to be \emph{orientation-preserving}.

It happens that the group of orientation-preserving isometries has a convenient representation in terms of matrices.
We can see this by considering the action of such an isometry on the infinite boundary points, $ \partial \textbf{H}^3 $, of $ \textbf{H}^3 $.
Looking at the Poincar\'e ball model, these infinite boundary points can be viewed as the Riemann sphere, $ \widehat{\mathbb{C}} = \mathbb{C} \cup \{\infty\} $.
It happens that the action of any of these isometries on $ \widehat{\mathbb{C}} $
takes the form of M\"obius transformations.
Further, the action of an isometry on the points at infinity fully characterizes it on the hyperbolic space.
Thus, for any such isometry $ \sigma $, it will send $ z \in \widehat{\mathbb{C}} $ to
\[
\sigma(z) = \frac{az + b}{cz + d}
\]
and $ \sigma(\infty) = \frac{a}{c} $.
Because of the invariance under scaling the numerator and denominator by a common factor,
the group of these isometries is $ PSL_2 (\mathbb{C}) $.
Using the behavior of the geodesics to extend $ \sigma $ from $ \partial\textbf{H}^3 $ to $ \textbf{H}^3 $,
\[
\sigma(z, \lambda) = \left(
\frac{(d - \conj{c}\conj{z})(az - b) - \lambda^2 \conj{c}a}{|cz - d|^2 + \lambda^2 |c|^2},
\frac{|ad - bc| \lambda}{|cz - d|^2 + \lambda^2 |c|^2}
\right)
\]
for any $ (z, \lambda) \in \textbf{H}^3 $, see \cite{Sw}.
\medskip

Throughout this text, we will use $ \Gamma_\infty $ to refer to
the stabilizer subgroup of $ \infty \in \partial \textbf{H}^3 $ in $ \Gamma $.
We will also be using $ T_s \in \Gamma_\infty $ to denote
\[ \begin{bmatrix} 1 & s \\ 0 & 1 \end{bmatrix} \]
which, for any $ s \in \mathcal{O}_d $, is a translation of $ \textbf{H}^3 $.

\section{The Method}

\subsection{Background}

As mentioned above, Macbeath's Theorem allows us to determine a presentation for any subgroup of isometries
using an arbitrary subset, $ V $, of the space.

\begin{macthm} \label{macthm} \cite{Mac}
Let $ X $ be a topological space and let $ G $ be a group acting continuously on $ X $.
Let $ V $ be an open subset of $ X $ whose orbit under $ G $ covers $ X $.

If $ X $ is connected then the elements, $ S = \{g \in G \;:\; g(V) \cap V \neq \emptyset \} $, generate $ G $.
If $ X $ is simply connected then the relations,
\[
R = \{ a^{-1}b = c \;:\; a, b, c \in S, a^{-1}b = c, a(V) \cap b(V) \cap V \neq \emptyset \}
\]
along with the generators $ S $ form a presentation $ G = \langle S | R \rangle $.
\end{macthm}

We will apply this theorem to a horoball based at $ \infty $ of height $ h > 0 $ which we will call $ V $.
Let us consider the orbit of such a horoball.
For each $ \sigma = \begin{bmatrix} a & b \\ c & d \end{bmatrix} \in \Gamma_d $, $ \sigma(\infty) = \frac{a}{c} $.
Thus, $ \sigma(V) $ will be a horoball based at $ \frac{a}{c} \in \mathbb{Q}(\sqrt{-d}) $ of some size.
The orbit of $ V $ consists of horoballs based at rational points along with $ V $.

Since our method is limited to singly cusped quotient spaces, we require $ \mathcal{O}_d $ to be a PID
and so will only be considering Bianchi groups, $ \Gamma_d $ where $ d \in \{ -1, -2, -3, -7, -11, -19, -43, -67, -163 \} $.
Further, because of implementation difficulties, we will only consider cases where $ \mathcal{O}_d $ has no non-trivial units.
There exist non-trivial units $ \sqrt{-1} \in \mathcal{O}_{-1} $ and $ \frac{1 + \sqrt{3}}{2} \in \mathcal{O}_{-3} $ eliminating these cases.

\begin{lem} \label{diamLm}
Let $ V $ be a horoball based at $ \infty $ of height $ h > 0 $.
Suppose $ \sigma \in PSL_2(\mathbb{C}) $ with
\[ \sigma = \begin{bmatrix} a & b \\ c & d \end{bmatrix} \]
does not fix $ \infty $ (that is $ c \neq 0 $).
Then, $ \sigma(V) $ considered as a Euclidean ball in the upper half-space
will have diameter $ \frac{1}{h |c|^2} $.
\end{lem}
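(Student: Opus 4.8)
The plan is to use the explicit formula for the action of $\sigma$ on $\textbf{H}^3$ recorded in Section \ref{hyp_space} and reduce the claim to a one-line optimization of the $\lambda$-coordinate. First I would observe that, since $c \neq 0$, the base point $\sigma(\infty) = a/c$ is finite, so $\sigma(V)$ is a horoball of the first type: a Euclidean ball tangent to and sitting above the boundary plane, based at $a/c$. For any such boundary-tangent ball the Euclidean diameter coincides with the maximal height, i.e. the $\lambda$-coordinate of its topmost point, since a ball of Euclidean radius $r$ tangent to the plane has its center at height $r$ and its top at height $2r$. It therefore suffices to compute the supremum of the $\lambda$-coordinate of $\sigma(z,\lambda)$ over $(z,\lambda) \in V$.

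Next I would invoke the action formula, whose $\lambda$-component is
\[
\lambda' = \frac{|ad-bc|\,\lambda}{|cz-d|^2 + \lambda^2|c|^2}.
\]
Because $\sigma \in PSL_2(\mathbb{C})$ may be normalized so that $ad - bc = 1$, the numerator reduces to $\lambda$. The optimization then separates cleanly: for each fixed $\lambda$ the denominator is minimized over $z \in \mathbb{C}$ by the choice $z = d/c$, which annihilates the term $|cz-d|^2$. Here is the one place I would explicitly use the hypothesis $c \neq 0$, so that this minimizer actually lies in $\mathbb{C}$.

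Substituting $z = d/c$ leaves $\lambda' = \frac{1}{\lambda\,|c|^2}$, which is strictly decreasing in $\lambda$, so over the half-space $V = \{\lambda > h\}$ the supremum is approached as $\lambda \to h^{+}$. This gives a supremum of $\frac{1}{h\,|c|^2}$ for the $\lambda$-coordinate on $\sigma(V)$, which by the reduction of the first paragraph is precisely the Euclidean diameter of $\sigma(V)$, as claimed.

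I do not anticipate a serious obstacle: the statement is essentially a direct computation once the action formula is available. The only points requiring care are the geometric reduction in the first step — confirming that the diameter of a boundary-tangent horoball equals its maximal height, realized here as the supremum (not maximum) of the $\lambda$-coordinate over the open horoball $V$ — and keeping track of the $PSL_2$ normalization $|ad-bc| = 1$ so that the numerator simplifies correctly.
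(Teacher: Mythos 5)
Your proof is correct, but it takes a genuinely different route from the paper. The paper argues synthetically: it locates the apex $A$ of $\sigma(\partial V)$, pulls back the vertical geodesic through $A$ to find $\sigma^{-1}(A) = (-d/c, h)$, constructs the semicircular geodesic $\mu$ of Euclidean radius $h$ tangent to $\partial V$ at that point, and then pushes $\mu$ forward: since tangency is preserved, $\sigma(\mu)$ is a semicircle tangent to $\sigma(\partial V)$ at $A$ and centered at $a/c$, so the diameter is read off as the distance from $a/c$ to the computed endpoint $\sigma(h - d/c) = \frac{a}{c} - \frac{1}{c^2 h}$. You instead reduce the claim to an optimization of the $\lambda$-coordinate using the explicit extension formula already recorded in Section \ref{hyp_space}: since $\sigma(V)$ is a boundary-tangent ball, its diameter is $\sup_{p \in \sigma(V)} \lambda(p)$, and because $V = \mathbb{C} \times (h, \infty)$ is a product, the iterated supremum
\[
\sup_{\lambda > h} \; \sup_{z \in \mathbb{C}} \; \frac{\lambda}{|cz - d|^2 + \lambda^2 |c|^2} \;=\; \sup_{\lambda > h} \frac{1}{\lambda |c|^2} \;=\; \frac{1}{h|c|^2}
\]
is legitimate and gives the result directly. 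Your version is more self-contained and mechanical — no auxiliary tangent geodesic is needed, the role of $c \neq 0$ is transparent (it is exactly what makes the minimizer $z = d/c$ pin the denominator at $\lambda^2|c|^2$, whereas $c = 0$ would make $\lambda'$ unbounded, matching a horoball based at $\infty$), and the paper's closing remark that the diameter is below $\frac{1}{h}$ falls out of the same monotonicity. What the paper's construction buys is independence from the precise form of the extension formula (and robustness against its sign conventions: note the paper's formula uses $|cz - d|^2$ where the standard convention has $|cz + d|^2$; your argument survives either way, with minimizer $z = \pm d/c$), since it only uses that isometries preserve geodesics, horoballs, and tangency. Both proofs rely on the unproved standing fact that isometries carry horoballs to horoballs, so neither is weaker on that score; your one point of care — that the supremum over the open horoball is the apex height $2r$ and need not be attained — is handled correctly.
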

\begin{proof}
Let $ B = \mathbb{C} \times h $ be the boundary of $ V $ so
$ \sigma(B) $ is the horosphere bounding $ \sigma(V) $.
Let $ A = (\frac{a}{c}, r) $ be the apex of $ \sigma(B) $
so that $ r $ is the supremum of $ \lambda $ values on the ball.
Then, we consider the vertical geodesic, $ \nu $, running through the apex.
Since the geodesic's endpoints are $ a / c $ and $ \infty $ the geodesic's pre-image, $ \sigma^{-1}(\nu) $
will have endpoints $ \sigma^{-1}(a / c) = \infty $ and $ \sigma^{-1}(\infty) = -d / c $.

We know $ A = \nu \cap \sigma(B) $ so $ \sigma^{-1}(A) = \sigma^{-1}(\nu) \cap B = (-d / c, h) $.
Now, construct the geodesic $ \mu $ tangent to $ B $ at $ \sigma^{-1}(A) $.
In the Euclidean half-space, $ \mu $ will then be a semicircle with radius $ h $.
So we can take one of the endpoints of $ \mu $ to be $ -d / c + h $.
Thus, $ \sigma(\mu) $ will have endpoint
\[
\frac{a\left( h - \frac{d}{c} \right) + b}{c \left( h - \frac{d}{c} \right) + d}
= \frac{hac - ad + bc}{c^2 h - cd + dc}
= \frac{hac - 1}{c^2 h} = \frac{a}{c} - \frac{1}{c^2 h} .
\]

Since $ \mu $ is tangent to $ B $ as $ \sigma^{-1}(A) $,
we know $ \sigma(\mu) $ is tangent to $ \sigma(B) $ at $ A $.
Thus, the center of the semicircle $ \sigma(\mu) $ is $ a / c $.
The distance from $ A $ to $ a / c $ (which is the diameter of $ \sigma(B) $ and $ \sigma(V) $)
is then equal to $ |\sigma(a / c) - \sigma(h - d/c)| = \frac{1}{h |c|^2} $.
In particular, the diameter is less than $ \frac{1}{h} $ for all $ c \neq 0 $.
\end{proof}
\medskip

\begin{lem} \label{ballcoverLm}
Let $ V $ be a horoball based at $ \infty $ of height $ h > 0 $.
Say that $ D $ is a fundamental domain of $ \mathbb{C} / \mathcal{O}_d $ so that
for all $ z \in \mathbb{C} $, there exist $ s \in \mathcal{O}_d $ and $ r \in D $ with $ z = s + r $.
If for all $ r \in D $, $ (r, h) \in \sigma(V) $ for some $ \sigma \in \Gamma $
then $ \Gamma(V) $ covers $ \textbf{H}^3 $.
\end{lem}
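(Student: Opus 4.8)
The plan is to translate the statement into a condition about heights and then argue via a maximal-height representative of each orbit. First I would record the reformulation: a point $p=(z,\lambda)$ lies in $\Gamma(V)$ exactly when $\sigma^{-1}(p)\in V$ for some $\sigma\in\Gamma$, i.e. when some point of the orbit $\Gamma p$ has height exceeding $h$. Using the explicit action formula from Section \ref{hyp_space}, for $\tau=\begin{bmatrix} a & b \\ c & d \end{bmatrix}\in\Gamma$ the height of $\tau(z,\lambda)$ is $f(\lambda)=\dfrac{\lambda}{|cz-d|^2+\lambda^2|c|^2}$, since $|ad-bc|=1$ in $PSL_2$. Thus ``$\Gamma(V)$ covers $\textbf{H}^3$'' is equivalent to showing that for every $p$ the orbit $\Gamma p$ contains a point of height strictly greater than $h$.

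Next I would promote the hypothesis from $D$ to all of $\mathbb{C}$. Since $T_s(V)=V$ for every $s\in\mathcal{O}_d$ and $T_s$ sends $(r,h)$ to $(r+s,h)$, the assumption that $(r,h)\in\sigma(V)$ for each $r\in D$ yields $(z,h)\in\Gamma(V)$ for \emph{every} $z\in\mathbb{C}$: writing $z=s+r$ with $s\in\mathcal{O}_d$ and $r\in D$, we have $(z,h)=T_s(r,h)\in (T_s\sigma)(V)\subseteq\Gamma(V)$. So the entire horizontal plane at height $h$ is covered.

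For the main step, fix an arbitrary $p\in\textbf{H}^3$ and let $q=(z^*,\lambda^*)$ be a point of \emph{maximal} height in $\Gamma p$; such a point exists because discreteness of $\Gamma$ forces $|cz^*-d|^2+\lambda^2|c|^2$ to attain a positive minimum over the admissible bottom rows $(c,d)$. By the previous paragraph $(z^*,h)\in\Gamma(V)$, so there is $\tau\in\Gamma$ with $f(h)>h$; writing $A=|cz^*-d|^2$ and $B=|c|^2$ (the same constants govern $\tau$ applied at $z^*$ for any height), this reads $A+Bh^2<1$. Because pure translations preserve height, this witnessing $\tau$ must have $c\neq 0$, hence $B>0$. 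On the other hand $\tau(q)\in\Gamma p$, so maximality of $\lambda^*$ gives $f(\lambda^*)\le\lambda^*$, i.e. $A+B(\lambda^*)^2\ge 1$. Subtracting the two inequalities yields $B\bigl((\lambda^*)^2-h^2\bigr)>0$, and since $B>0$ we conclude $\lambda^*>h$. Thus $q\in\Gamma p$ has height greater than $h$, so $p\in\Gamma(V)$; as $p$ was arbitrary, $\Gamma(V)=\textbf{H}^3$.

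The one place needing care is the existence of a maximal-height representative, which rests on the properness/discreteness of the Bianchi action (equivalently, that $|cz^*-d|^2+\lambda^2|c|^2$ has a positive infimum attained over lattice pairs $(c,d)$), together with the small observation that the witnessing $\tau$ cannot be a translation and so has $c\neq 0$. Once these are in hand the remaining inequalities are routine, and Lemma \ref{diamLm} is not strictly required for this argument.
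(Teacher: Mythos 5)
Your argument is correct, but it takes a genuinely different route from the paper's. The paper proceeds by contradiction with soft topology: if $ S = \Gamma(V) $ were a proper subset of $ \textbf{H}^3 $, then since $ S $ is open and $ \textbf{H}^3 $ is connected there is a point $ x \in \partial S $; local finiteness of the horoball family places $ x $ on some $ \sigma(\partial V) $, and pulling back to $ \partial V = \mathbb{C} \times \{h\} $ contradicts the hypothesis, extended from $ D $ to all of $ \mathbb{C} $ by the translations $ T_s $ exactly as in your second paragraph. You instead run the classical highest-point argument from reduction theory: for fixed $ p = (z, \lambda) $ the denominator $ |cz+d|^2 + \lambda^2|c|^2 $ is a positive definite quadratic form on the lattice $ \mathcal{O}_d^2 \subset \mathbb{R}^4 $, so its sublevel sets contain finitely many coprime bottom rows and a maximal-height representative $ q = (z^*, \lambda^*) $ of $ \Gamma p $ exists; then the two inequalities $ A + Bh^2 < 1 $ (from the witnessing $ \tau $ at height $ h $) and $ A + B(\lambda^*)^2 \geq 1 $ (from maximality), with the \emph{same} constants $ A, B $ because both are evaluated at $ z^* $, force $ \lambda^* > h $. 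Your version is direct rather than by contradiction, uses no connectedness, and replaces the paper's unproved ``local finiteness'' assertion with an explicit lattice-point count, so it is arguably more self-contained; the paper's version is shorter and never needs the explicit action formula. Three small points of care: the existence claim should be phrased for fixed $ p $ before $ z^* $ is named (as written, $ z^* $ appears inside its own justification); ``pure translations preserve height'' should read ``elements with $ c = 0 $ preserve height,'' since for $ d = -1, -3 $ the stabilizer of $ \infty $ also contains unit rotations $ z \mapsto u^2 z + ub $ with $ |u| = 1 $ --- the conclusion $ B > 0 $ survives because $ c = 0 $ gives height $ \lambda / |d|^2 \leq \lambda $; and the denominator $ |cz - d|^2 $ you quote follows the paper's displayed formula, whereas the convention $ \sigma(z) = (az+b)/(cz+d) $ used elsewhere in the paper gives $ |cz + d|^2 $ --- your proof is insensitive to this discrepancy, again because the same constant $ A $ enters both inequalities.
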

\begin{proof}
We prove by contradiction.
Suppose $ S = \Gamma(V) $ does not cover $ \textbf{H}^3 $.
Since $ S $ is open and $ \textbf{H}^3 $ is connected,
$ \textbf{H}^3 \setminus S $ cannot be open so $ S \subsetneq \overline{S} $.
Thus, there exists $ x \in \partial S $.
By local finiteness, there exists a neighborhood $ U $ of $ x $
in which $ \overline{S} \cap U = (\sigma_1 (\overline{V}) \cup \ldots \cup \sigma_n (\overline{V}) ) \cap U $.
Thus, there exists $ \sigma $ such that $ x \in \sigma(\partial V) $.
Taking the pre-image, we get $ \sigma^{-1}(x) \in \partial V = \mathbb{C} \times \{h\} $
so $ \sigma^{-1}(x) = (z + s, h) $ where $ z \in D $ and $ s \in \mathcal{O}_d $.
Contradicting the fact that $ \sigma^{-1}(x) \notin \Gamma (V) $
since by assumption $ x \notin \Gamma(V) $.
\end{proof}
\medskip

\begin{lem} \label{matDefLm}
For all $ a, c \in \mathcal{O} $ with $ \gcd(a, c) = 1 $, there exists a matrix
\[ \sigma = \begin{bmatrix} a & b \\ c & d \end{bmatrix} \]
where $ b, d \in \mathcal{O} $ and $ ad - bc = \det(\sigma) = 1 $.
\end{lem}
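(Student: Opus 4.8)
The plan is to reduce the statement to a Bézout-type identity in the ring $\mathcal{O} = \mathcal{O}_d$. The crucial input is that, for every $d$ under consideration, $\mathcal{O}_d$ has class number one and is therefore a principal ideal domain. In a PID the hypothesis $\gcd(a, c) = 1$ is precisely the statement that $a$ and $c$ generate the unit ideal; that is, $(a, c) = \mathcal{O}$. So the first thing I would do is record this equivalence explicitly, since it is where the class-number-one restriction actually enters.

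Next I would translate the determinant condition into a linear equation over $\mathcal{O}$. We seek $b, d \in \mathcal{O}$ satisfying $ad - bc = 1$, which we may rewrite as $a \cdot d + c \cdot (-b) = 1$. In other words, it suffices to express $1$ as an $\mathcal{O}$-linear combination of $a$ and $c$. Because $(a, c) = \mathcal{O}$, we have $1 \in (a, c)$, so there exist $u, v \in \mathcal{O}$ with $au + cv = 1$. Setting $d = u$ and $b = -v$ then gives
\[ ad - bc = au - (-v)c = au + vc = 1, \]
and the matrix $\sigma = \begin{bmatrix} a & b \\ c & d \end{bmatrix}$ has entries in $\mathcal{O}$ with $\det(\sigma) = 1$, as required. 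The same computation also disposes of the degenerate case $c = 0$: there $\gcd(a, 0) = 1$ forces $a$ to be a unit, and the Bézout relation supplies $u = a^{-1}$ together with $b = 0$.

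The main obstacle is, frankly, minor: the only real point requiring care is making the passage from the notation $\gcd(a, c) = 1$ to the ideal-theoretic statement $(a, c) = \mathcal{O}$ precise, and flagging that this is exactly where the principal ideal domain property (equivalently, the singly-cusped hypothesis) is being used. Once that identification is in place, the existence of the Bézout combination is immediate and no genuine calculation remains; the entire content of the lemma lives in the class-number-one assumption on $\mathcal{O}_d$.
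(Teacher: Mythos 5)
Your proposal is correct and matches the paper's argument: both use that $\mathcal{O}_d$ is a PID to get $\langle a \rangle + \langle c \rangle = \mathcal{O}$ from $\gcd(a,c) = 1$ (the paper spells this out via a generator $g$ dividing both $a$ and $c$, hence dividing $1$), and then read the resulting Bézout identity $au + cv = 1$ as the determinant condition $ad - bc = 1$. Your explicit choice $d = u$, $b = -v$ and the remark on the $c = 0$ case are just slightly more detailed versions of the same proof.
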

\begin{proof}
Since $ \mathcal{O} $ is a PID, $ \langle a \rangle + \langle c \rangle = \langle g \rangle $ for $ g \in \mathcal{O} $.
But then $ g | a $ and $ g | c $ so $ g | \gcd(a, c) = 1 $ so $ g = 1 $.
Thus, $ 1 \in \langle a \rangle + \langle c \rangle $ meaning
there exist $ ad \in \langle a \rangle $ and $ -bc \in \langle c \rangle $ such that $ 1 = ad - bc $.
\end{proof}

\begin{defn}
Let $ a, b \in \mathcal{O} $ with $ \gcd(a, c) = 1 $.
We define the notation $ \mathrm{Mat}\left(\frac{a}{c}\right) $ to mean a matrix
\[ \begin{bmatrix} a & b \\ c & d \end{bmatrix} \in SL_2(\mathcal{O}) \]
for some $ c, d \in \mathcal{O} $.
Such a matrix exists by Lemma \ref{matDefLm}.
\end{defn}

While such a matrix will always exist, it won't be unique.
Pre-multiplication of $ \mathrm{Mat}\left( \frac{a}{c} \right) $ by an element in $ \Gamma_\infty $ will leave $ a $ and $ c $ unaffected.
So we must choose a particular matrix for each $ \mathrm{Mat}\left( \frac{a}{c} \right) $.
\medskip

Using the size of the horoballs in $ \Gamma (V) $ again,
we consider the intersection $ \sigma(V) \cap \partial V $.

\begin{defn}
Let $ V $ be a horoball based at $ \infty $ of height $ h > 0 $.
For any element $ \sigma \in \Gamma \setminus \Gamma_\infty $, we define
\[
\mathrm{Circ}(\sigma) = \{z \in \mathbb{C} \;:\; (z, h) \in \sigma(V) \} = \sigma(V) \cap \partial V .
\]
Further, for any $ a, c \in \mathcal{O}_d $, we say
\[ \mathrm{Circ}\left( \frac{a}{c} \right) = \mathrm{Circ}\left(\mathrm{Mat}\left( \frac{a}{c} \right)\right). \]
\end{defn}
\begin{proof}
We will prove the well-definedness of the second definition.
Suppose, for $ a, c \in \mathcal{O}_d $ that
\[
\sigma = \mathrm{Mat}\left( \frac{a}{c} \right) = \begin{bmatrix} a & b \\ c & d \end{bmatrix}
\]
Then, by \cref{diamLm}, $ \sigma(V) $ will be the horoball based at $ \frac{a}{c} $ whose Euclidean diameter is $ \frac{1}{h |c|^2} $.
Since both the location and size of $ \sigma(V) $ are independent of the choice of $ b $ and $ d $,
the intersection $ \sigma(V) \cap \partial V $ will only depend on $ \frac{a}{c} $.
\end{proof}

We will use these circles to simplify the process of checking for covering.
To do this, we derive some properties about these circles.
First, it is immediately clear that the basepoint of the horoball
will have the same coordinate in the complex plane as the center of $ \mathrm{Circ}(\sigma) $.
Then, it remains to determine the radius of the circle.

\begin{lem}
Let $ A = \mathrm{Circ}\left( \begin{bmatrix} a & b \\ c & d \end{bmatrix} \right) $
where the main horoball, $ V $, has height $ h $.
Then, the radius of $ A $ will be
\[ \sqrt{\frac{1}{|c|^2} - h^2} \]
\end{lem}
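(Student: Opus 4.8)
The plan is to relate the radius of the circle $A = \mathrm{Circ}(\sigma)$ to the Euclidean geometry of the intersection between the horoball $\sigma(V)$ and the bounding horosphere $\partial V = \mathbb{C} \times \{h\}$. From \cref{diamLm}, I know that $\sigma(V)$ is a Euclidean ball tangent to the boundary plane at $a/c$ with diameter $\frac{1}{h|c|^2}$. Since it is tangent to the boundary plane at the point $(a/c, 0)$ and has vertical diameter $\frac{1}{h|c|^2}$, its center sits at height $\frac{1}{2h|c|^2}$ directly above $a/c$, with radius $\rho = \frac{1}{2h|c|^2}$.

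First I would set up the equation for $\sigma(V)$ as an explicit Euclidean ball. Writing the center as $(a/c, \rho)$ with $\rho = \frac{1}{2h|c|^2}$, a point $(z, \lambda)$ lies in $\sigma(V)$ exactly when $|z - a/c|^2 + (\lambda - \rho)^2 < \rho^2$. The set $\mathrm{Circ}(\sigma)$ is precisely the cross-section of this ball at height $\lambda = h$, so I would substitute $\lambda = h$ into the inequality. This gives $|z - a/c|^2 < \rho^2 - (h - \rho)^2 = 2h\rho - h^2$, which is a disk in the plane centered at $a/c$ of radius $\sqrt{2h\rho - h^2}$.

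The main computation, then, is simply plugging in $\rho = \frac{1}{2h|c|^2}$ and simplifying: $2h\rho - h^2 = 2h \cdot \frac{1}{2h|c|^2} - h^2 = \frac{1}{|c|^2} - h^2$. Taking the square root yields the claimed radius $\sqrt{\frac{1}{|c|^2} - h^2}$, and the center is at $a/c$ as already observed just before the statement.

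The only genuinely delicate point — and the one I would flag as the main obstacle — is justifying the claim that the center of $\sigma(V)$ lies at height $\frac{1}{2h|c|^2}$, i.e. that the ball is tangent to the boundary plane from above with vertical extent equal to its diameter. This follows because $\sigma(V)$ is a horoball based at the finite point $a/c \in \partial\textbf{H}^3$, so by the classification of horoballs in \cref{hyp_space} it must be a Euclidean ball tangent to the boundary plane at its basepoint; its topmost point is the apex $A$ computed in the proof of \cref{diamLm}, at height equal to the diameter $\frac{1}{h|c|^2}$, forcing the center and radius to be half that. I would also note implicitly that the radius is real only when $\frac{1}{|c|^2} > h^2$, i.e. exactly when $\sigma(V)$ actually reaches above height $h$ and the intersection is nonempty, which is consistent with the diameter bound $\frac{1}{h|c|^2}$ from \cref{diamLm}.
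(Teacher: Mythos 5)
Your proof is correct and amounts to essentially the same argument as the paper's: both use Lemma~\ref{diamLm} to identify $\sigma(V)$ as the Euclidean ball of diameter $\frac{1}{h|c|^2}$ tangent to the boundary plane at $a/c$, and then compute the radius of its horizontal cross-section at height $h$. The only difference is presentational — the paper extracts $r^2 = \frac{1}{|c|^2} - h^2$ synthetically from a similar-triangles diagram (the geometric-mean relation $s^2 = h \cdot \frac{1}{h|c|^2}$ followed by the Pythagorean theorem), while you substitute $\lambda = h$ directly into the ball's defining inequality $|z - a/c|^2 + (\lambda - \rho)^2 < \rho^2$ with $\rho = \frac{1}{2h|c|^2}$, an equivalent coordinate computation, and your side remark that the radius is real exactly when the intersection is nonempty is a correct observation.
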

\begin{proof}
Below is a diagram of the side view of the circle.
$ \mathrm{Circ}(\sigma) $ is the red line.

\bigskip
% Intersection of horoball with main horosphere
\begin{center} \begin{tikzpicture}[scale=3]
% Horoball sigma(V)
\draw[fill=blue!20] (1, 1) circle(1);
\node at (0, 1.75) {$ \sigma(\partial V) $};
\draw[thick] (-0.25, 8/13) -- (2.25, 8/13);
\node at (-1/3, 2/3) {$ \partial V $};
\draw[thick, red] (1/13, 8/13) -- (25/13, 8/13);

% Boundary plane
\draw (0, 0) -- (2, 0);
% Measurements
\draw[decorate, decoration={brace, raise=2pt}] (1, 0) -- (1, 2) node[pos=0.5, left=5pt]{$ \frac{1}{h |c|^2} $};
\draw[decorate, decoration={brace, mirror, raise=2pt}] (25/13, 0) -- (25/13, 8/13) node[pos=0.5, right=5pt]{$ h $};
\draw[decorate, decoration={brace, raise=2pt}] (1, 8/13) -- (25/13, 8/13) node[pos=0.5, above=5pt]{$ r $};
\draw[decorate, decoration={brace, raise=2pt}] (1, 0) -- (25/13, 8/13) node[pos=0.42, above=6pt]{$ s $};

% Struts
\draw (25/13, 0) -- (25/13, 8/13) -- (1, 0) -- (1, 2) -- (25/13, 8/13);
% Right Angle Marks
\draw (0.92, 0) -- (0.92, 0.08) -- (1, 0.08);
\draw (25/13 - 0.08, 0) -- (25/13 - 0.08, 0.08) -- (25/13, 0.08);
% Angle Marks
\draw (1.15, 0) arc[start angle=0, delta angle=31.6, radius=0.15];
\draw (1, 2 - 0.15) arc[start angle=270, delta angle=31.6, radius=0.15];
\end{tikzpicture} \end{center}
\bigskip

Noticing the similar triangles, we have
\[
\frac{h}{s} = \frac{s}{\frac{1}{h |c|^2}}
\]
so $ s^2 = 1 / |c|^2 $.
Thus, $ r^2 = \frac{1}{|c|^2} - h^2 $.

\end{proof}
\medskip

\begin{lem} \label{tangLm}
For any two Euclidean spheres, $ A $ and $ B $, tangent to a plane $ P $,
$ A $ intersects $ B $ if and only if
\[ d^2 \leq 4sr \]
with tangency at equality
where $ r $ is the radius of $ A $, $ s $ is the radius of $ B $,
and $ d $ is the distance between $ A \cap P $ and $ B \cap P $.
\end{lem}
\begin{proof}
For a given $ r $ and $ s $, if two spheres are closer than
the distance necessary for tangency then they will have an intersection.
It remains to show that when $ d^2 = 4sr $, $ A $ and $ B $ are tangent.
If we consider a plane perpendicular to $ P $
passing through the centers of $ A $ and $ B $
then we get the following cross-sectional diagram.

\bigskip
% Tangency of two horoballs
\begin{center} \begin{tikzpicture}[scale=0.3]
\draw (9, 9) circle(9);
\node at (0, 15) {$ A $};
\draw (21, 4) circle(4);
\node at (25, 6) {$ B $};

% Boundary Plane
\draw (0, 0) -- (25, 0);

% Vertical Radii
\draw (9, 0) -- (9, 9);
\draw (21, 0) -- (21, 4);

% Right Triangle
\draw [thick, red] (9, 9) -- (21, 4) -- (9, 4) -- (9, 9);

% Labels
\draw[decorate, decoration={brace, raise=2pt}] (9, 0) -- (9, 9) node[pos=0.5, left=6pt]{$ r $};
\draw[decorate, decoration={brace, raise=2pt, mirror}] (21, 0) -- (21, 4) node[pos=0.5, right=6pt]{$ s $};
\draw[decorate, decoration={brace, raise=2pt, mirror}] (9, 4) -- (21, 4) node[pos=0.5, below=6pt]{$ d $};

\end{tikzpicture} \end{center}

Then, clearly, $ (r + s)^2 = (r - s)^2 + d^2 $ so
\[ d^2 = (r^2 + 2rs + s^2) - (r^2 - 2rs + s^2) = 4rs \]
\end{proof}

\begin{lem}
Let $ A $ and $ B $ be closed Euclidean spheres tangent to the horizontal plane $ P $.
Suppose $ A $ has radius $ r $, $ B $ has radius $ s $, and $ d $ is the distance between $ P \cap A $ and $ P \cap B $.
If $ r \geq s $ then the height of the highest point in $ A \cap B $ is
\[ \begin{cases}
2s & \text{if } d^2 \leq 4s(r - s) \\
\frac{(s + r) + \sqrt{4sr - d^2}}{2 \left(1 + \left(\frac{s - r}{d}\right)^2\right)} & \text{otherwise} \\
\end{cases} \]
\end{lem}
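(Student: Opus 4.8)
The plan is to choose coordinates adapted to the configuration and reduce the three-dimensional problem to a planar one. I would take $P$ to be the $z=0$ plane, place the tangency point of $A$ at the origin so that $A$ is the closed ball centered at $(0,0,r)$, and place the tangency point of $B$ at $(d,0,0)$ so that $B$ is the closed ball centered at $(d,0,s)$. Both balls are invariant under $y\mapsto -y$, so their intersection $A\cap B$ and the surface circle $\partial A\cap\partial B$ (when the balls meet) are symmetric under this reflection. Consequently the highest and lowest points of that circle lie in the plane $y=0$, and it suffices to work with the two cross-sectional disks $x^2+(z-r)^2\le r^2$ and $(x-d)^2+(z-s)^2\le s^2$ in the $xz$-plane.

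Next I would separate the two geometric cases according to where the highest point of the solid lens $A\cap B$ occurs. Since $A\cap B\subseteq B$, the height of any point of the intersection is at most $2s$, the height of the apex of the smaller ball $B$. If that apex $(d,0,2s)$ already lies in $A$, then it attains this bound and is therefore the highest point of $A\cap B$. Substituting the apex into the defining inequality for $A$ gives $d^2+(2s-r)^2\le r^2$, which simplifies precisely to $d^2\le 4s(r-s)$; this is the first case, with answer $2s$. By \cref{tangLm} the balls meet at all exactly when $d^2\le 4sr$, and when the apex of $B$ is not engulfed by $A$ the apex of $A$ cannot lie in $B$ either (that would force $d^2\le 4r(s-r)\le 0$ since $r\ge s$), so in the remaining case the highest point of the lens must sit on the rim circle $\partial A\cap\partial B$.

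Finally, for the generic case I would locate this rim maximum directly. Subtracting the two sphere equations yields the radical plane $dx+(s-r)z=d^2/2$; solving for $x$ and substituting into $x^2+(z-r)^2=r^2$ produces a quadratic in $z$,
\[ \left(1+\tfrac{(r-s)^2}{d^2}\right)z^2-(r+s)\,z+\tfrac{d^2}{4}=0. \]
Its discriminant simplifies to $4rs-d^2$ (nonnegative exactly when the balls meet), and the larger root gives the apex of the intersection circle,
\[ z=\frac{(r+s)+\sqrt{4rs-d^2}}{2\left(1+\left(\tfrac{r-s}{d}\right)^2\right)}, \]
which is the stated expression since $(r-s)^2=(s-r)^2$. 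The main obstacle is not the algebra, which is routine, but the case analysis: one must argue cleanly that the apex of $B$ supplies the maximum exactly when $d^2\le 4s(r-s)$, and confirm that the two formulas agree along this threshold, where $4rs-d^2=4s^2$ forces $\sqrt{4rs-d^2}=2s$ and the second expression collapses to $2s$, so that the piecewise answer is both continuous and correct.
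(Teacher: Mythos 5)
Your proof is correct, and it arrives at the paper's quadratic by a genuinely different derivation. The paper works in the same cross-sectional plane (justified only implicitly by symmetry, which you make explicit) but obtains the equation synthetically: using similar triangles it derives the tangent--chord relations $2kr = k^2 + x^2$ and $2ks = k^2 + y^2$, where $k$ is the height of the rim point and $x, y$ are the horizontal distances to the two tangency points with $x + y = d$, and then eliminates $x$ and $y$ to get exactly your quadratic $\bigl(1 + \bigl(\tfrac{s-r}{d}\bigr)^2\bigr)k^2 - (s+r)k + \tfrac{d^2}{4} = 0$. Your radical-plane computation reaches the same equation with routine algebra instead of the chord geometry, and it buys additional rigor in two places where the paper is thin. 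First, the paper's entire treatment of the first case is the closing sentence ``if $d < 4s(r-s) = r^2 - (2s-r)^2$ then the apex of $B$ lies inside $A$ and so will always be the highest point'' --- which contains a slip ($d$ should be $d^2$, as your substitution of $(d,0,2s)$ into the inequality for $A$ pins down) and never argues why, in the complementary case, the maximum must occur on the rim circle $\partial A \cap \partial B$ rather than elsewhere on the boundary of the lens; your observation that the apex of $A$ cannot lie in $B$ when $r \geq s$ (else $d^2 \leq 4r(s-r) \leq 0$), so the height function restricted to either spherical cap is maximized only on the common rim, supplies that missing step. Second, your check that the two formulas agree at the threshold $d^2 = 4s(r-s)$, where $\sqrt{4sr - d^2} = 2s$ collapses the generic expression to $2s$, is absent from the paper and confirms the piecewise answer is consistent.
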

\begin{proof}
If we consider a plane perpendicular to $ P $
passing through the centers of $ A $ and $ B $
then we get the following cross-sectional diagram.

\bigskip
\begin{center} \begin{tikzpicture}
\draw (3, 3) circle(3);
\node at (0, 5) {$ A $};
\draw (7, 2) circle(2);
\node at (9, 3) {$ B $};

% Boundary Plane
\draw (0, 0) -- (9, 0);

% Sides of Triangle
\draw [thick, red] (3, 0) -- (5.92, 3.68) -- (7, 0);
% Altitude of Triangle
\draw [thick, red] (5.92, 0) -- (5.92, 3.68);

% Isoceles Side Triangles
\draw (3, 0) -- (3, 3) -- (5.92, 3.68) -- (7, 2) -- (7, 0);

% Mark angles
\draw[thick] (3, 0.3) arc[start angle=90, delta angle=-38, radius=0.3];
\draw[thick] (5.92, 3.68 - 0.3) arc[start angle=270, delta angle=-38, radius=0.3];
\draw[thick] (7, 0.3) arc[start angle=90, delta angle=16, radius=0.3];
\draw[thick] (7, 0.35) arc[start angle=90, delta angle=16, radius=0.35];
\draw[thick] (5.92, 3.68 - 0.3) arc[start angle=270, delta angle=16, radius=0.3];
\draw[thick] (5.92, 3.68 - 0.35) arc[start angle=270, delta angle=16, radius=0.35];

% Labels
\draw[decorate, decoration={brace, raise=2pt}] (3, 0) -- (3, 3) node[pos=0.5, left=6pt]{$ r $};
\draw[decorate, decoration={brace, raise=2pt}] (7, 2) -- (7, 0) node[pos=0.5, right=6pt]{$ s $};
\draw[decorate, decoration={brace, raise=2pt, mirror}] (3, 0) -- (5.92, 0) node[pos=0.5, below=6pt]{$ x $};
\draw[decorate, decoration={brace, raise=2pt, mirror}] (5.92, 0) -- (7, 0) node[pos=0.5, below=6pt]{$ y $};
\draw[decorate, decoration={brace, raise=2pt}] (5.92, 0) -- (5.92, 3.68) node[pos=0.5, left=6pt]{$ k $};

\end{tikzpicture} \end{center}
\bigskip

Then, from the similar triangles, we have the equations
\[ \begin{split}
d &= x + y \\
\frac{k}{\sqrt{k^2 + x^2}} &= \frac{\frac{\sqrt{k^2 + x^2}}{2}}{r} \qquad 2kr = k^2 + x^2\\
\frac{k}{\sqrt{k^2 + y^2}} &= \frac{\frac{\sqrt{k^2 + y^2}}{2}}{s} \qquad 2ks = k^2 + y^2 \\
\end{split} \]
which we can simplify to
\[ \begin{split}
2kr = k^2 + (d - y)^2 =& k^2 + y^2 + d^2 - 2yd = 2ks + d^2 - 2d \sqrt{2ks - k^2} \\
&\sqrt{2ks - k^2} = \frac{d}{2} + \frac{k}{d} (s - r) \\
2ks - k^2 &= \frac{d^2}{4} + \frac{k^2}{d^2} (s - r)^2 + k (s - r) \\
0 = k^2 &\left(1 + \left(\frac{s - r}{d}\right)^2 \right) - k (s + r) + \frac{d^2}{4} \\
k &= \frac{(s + r) \pm \sqrt{4sr - d^2}}{2\left(1 + \left(\frac{s - r}{d}\right)^2\right)}
\end{split} \]
Since we want the largest value of $ k $, we will take $ \pm $ to be $ + $.
Notice that if $ d < 4s(r - s) = r^2 - (2s - r)^2 $, though, then the apex of $ B $ (at height $ 2s $)
will lie inside $ A $ and so will always be the highest point.
\end{proof}

\medskip

\subsection{Finding the Generators}

\begin{defn}
Let $ V $ be a horoball based at $ \infty $ of height $ h > 0 $.
Let $ D $ be a fundamental domain of $ \mathbb{C} / \mathcal{O} $.
We define a set of ``generators at $ h $" to be a subset of $ \Gamma $,
\[
\mathrm{Gens}(h) = \left\{
\mathrm{Mat}\left( \frac{a}{c} \right)
\;\middle|\; a, c \in \mathcal{O}, |c| \leq \frac{1}{h},
\gcd(a, c) = 1, \frac{a}{c} \in D
\right\}.
\]
\end{defn}

As before, although we are using the expression $ \mathrm{Gens}(h) $ to represent this set,
the particular choice of representative for each horoball is arbitrary.

\begin{thm}
Let $ V $ be a horoball based at $ \infty $ of height $ h > 0 $.
Suppose $ S_\infty $ is a generating set for $ \Gamma_\infty $.
If
\[
D \subseteq \bigcup_{|c| \leq \frac{1}{h}} \left( \bigcup_{\gcd(a, c) = 1, \frac{a}{c} \in \overline{D}}
\mathrm{Circ}\left( \frac{a}{c} \right) \right)
\]
then,
\[
\mathrm{Gens}(h) \cup S_\infty
\]
will be a generating set for $ \Gamma $.
\end{thm}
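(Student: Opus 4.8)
The plan is to obtain the result as a direct consequence of Macbeath's Theorem (\cref{macthm}), splitting the work into two halves: first verifying that the orbit $\Gamma(V)$ covers $\textbf{H}^3$ so the theorem applies, and then rewriting the abstract generating set it produces in terms of $\mathrm{Gens}(h) \cup S_\infty$. For the covering, I would unwind the definition of $\mathrm{Circ}$: by definition $z \in \mathrm{Circ}(a/c)$ exactly when $(z, h) \in \mathrm{Mat}(a/c)(V)$. Hence the hypothesis $D \subseteq \bigcup \mathrm{Circ}(a/c)$ says precisely that for every $r \in D$ there is some $\sigma \in \Gamma$ with $(r, h) \in \sigma(V)$, which is exactly the hypothesis of \cref{ballcoverLm}. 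That lemma then yields that $\Gamma(V)$ covers $\textbf{H}^3$. Since $\textbf{H}^3$ is connected, the first half of \cref{macthm} applies and $S = \{g \in \Gamma : g(V) \cap V \neq \emptyset\}$ generates $\Gamma$. (The restriction $a/c \in \overline{D}$ in the hypothesis is harmless here, since the displayed union is contained in the union of all $\mathrm{Circ}(\sigma)$ over $\sigma \in \Gamma \setminus \Gamma_\infty$.)

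It then suffices to show $S \subseteq \langle \mathrm{Gens}(h) \cup S_\infty \rangle$, for then $\Gamma = \langle S \rangle \subseteq \langle \mathrm{Gens}(h) \cup S_\infty \rangle \subseteq \Gamma$ forces equality. I would split $S$ according to whether an element fixes $\infty$. If $g \in \Gamma_\infty$ then $g \in \langle S_\infty \rangle$ and we are done immediately. So suppose $g = \begin{bmatrix} a & b \\ c & d \end{bmatrix} \in S$ with $c \neq 0$. From $g(V) \cap V \neq \emptyset$ together with \cref{diamLm}, the horoball $g(V)$ is based at $a/c$ with Euclidean diameter $1/(h|c|^2)$; for it to meet the open half-space $V = \{\lambda > h\}$ its top must exceed height $h$, forcing $1/(h|c|^2) > h$, i.e.\ $|c| < 1/h$. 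In particular $|c| \leq 1/h$, and $\gcd(a, c) = 1$ since any common divisor of $a$ and $c$ divides $\det g = 1$.

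The crux is to convert $g$ into an element of $\mathrm{Gens}(h)$ modulo $\Gamma_\infty$. Choosing $s \in \mathcal{O}$ with $a/c + s \in D$ (possible since $D$ is a fundamental domain for $\mathbb{C}/\mathcal{O}$), the product $T_s g$ has first column $(a + sc, c) =: (a', c)$ with $a'/c = a/c + s \in D$, again $\gcd(a', c) = 1$, and the same $|c| \leq 1/h$; hence $\mathrm{Mat}(a'/c) \in \mathrm{Gens}(h)$. Two matrices of $SL_2(\mathcal{O})$ sharing a first column differ by right multiplication by an element of $\Gamma_\infty$ (their quotient is upper triangular of determinant one, as a short computation of $\mathrm{Mat}(a'/c)^{-1}(T_s g)$ shows), so $T_s g = \mathrm{Mat}(a'/c)\, t$ for some $t \in \Gamma_\infty$. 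Therefore
\[
g = T_s^{-1}\, \mathrm{Mat}\!\left(\tfrac{a'}{c}\right) t ,
\]
and since $T_s^{-1}, t \in \Gamma_\infty = \langle S_\infty \rangle$ while $\mathrm{Mat}(a'/c) \in \mathrm{Gens}(h)$, we conclude $g \in \langle \mathrm{Gens}(h) \cup S_\infty \rangle$, completing the case analysis and the proof.

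I expect the genuine subtlety to be the bookkeeping in this last step rather than any deep difficulty: one must check that translating the basepoint into $D$ preserves $|c|$ and the coprimality of the first column, and one must handle the non-uniqueness of $\mathrm{Mat}(a/c)$ by absorbing the ambiguity into $\Gamma_\infty$ on the correct (right) side. The discrepancy between the strict bound $|c| < 1/h$ coming from the open horoball and the non-strict bound $|c| \leq 1/h$ in the definition of $\mathrm{Gens}(h)$ causes no trouble, since enlarging the generating set only helps. It is worth noting that the real labor of invoking this theorem in practice—verifying the circle-covering hypothesis for each admissible $d$—is computational and lies outside this argument.
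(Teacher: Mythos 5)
Your proposal is correct and takes essentially the same approach as the paper: both verify the covering hypothesis of Macbeath's Theorem via \cref{ballcoverLm}, then show each generator $\sigma$ with $c \neq 0$ and $\sigma(V) \cap V \neq \emptyset$ satisfies $|c| \leq \frac{1}{h}$ by \cref{diamLm} and factors as a translation times an element of $\mathrm{Gens}(h)$ times a translation, absorbing the non-uniqueness of $\mathrm{Mat}\left(\frac{a'}{c}\right)$ into $\Gamma_\infty$ on the right (the paper computes this residual translation explicitly as $T_{\frac{d - d'}{c}}$). Your side remarks—the strictness of $|c| < \frac{1}{h}$ and the harmlessness of the $\overline{D}$ restriction—are sound minor refinements of the same argument.
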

\begin{proof}
We can prove this primarily by applying Macbeath's Theorem to $ V $ and $ \Gamma $.
First, to use Macbeath's Theorem, we must show that the orbit of $ V $ covers $ \textbf{H}^3 $.

If
\[
D \subseteq \bigcup_{|c| \leq \frac{1}{h}} \left( \bigcup_{\gcd(a, c) = 1, \frac{a}{c} \in \overline{D}}
\mathrm{Circ}\left( \frac{a}{c} \right) \right)
\]
then for all $ z \in D $, there exists $ a, c \in \mathcal{O} $ with $ z \in \mathrm{Circ}\left( \frac{a}{c} \right) $.
Thus, $ (z, h) \in \sigma(V) $ where $ \sigma = \mathrm{Mat}\left( \frac{a}{c} \right) \in \Gamma $.
Because this holds for all $ z \in D $, we can conclude, by \cref{ballcoverLm}, that $ \Gamma(V) $ covers $ \textbf{H}^3 $.

By Macbeath's Theorem, since the orbit of $ V $ covers $ \textbf{H}^3 $ and $ \textbf{H}^3 $ is simply connected,
the set $ \{\sigma \in \Gamma \;:\; \sigma(V) \cap V \neq \emptyset \} $ generates $ \Gamma $.
Now, we show that $ \mathrm{Gens}(h) $ and $ S_\infty $ will generate this set.
\medskip

Let $ \sigma \in \Gamma $ with $ \sigma(V) \cap V \neq \emptyset $ and
\[ \sigma = \begin{bmatrix} a & b \\ c & d \end{bmatrix} \]
We consider the cases when $ c = 0 $ and $ c \neq 0 $.

If $ c = 0 $ then $ 1 = ad - bc = ad $ which means $ a = d = \pm 1 $, because $ \mathcal{O} $ has no non-trivial units.
Further, because $ \Gamma = PSL_2(\mathcal{O}) $ only distinguishes transforms up to scaling, we can say without loss of generality that $ a = d = 1 $.
Thus,
\[ \sigma = \begin{bmatrix} 1 & b \\ 0 & 1 \end{bmatrix} \]
which fixes $ \infty $ so $ \sigma \in \Gamma_\infty $ and $ \sigma $ is generated by $ S_\infty $.

Otherwise, $ c \neq 0 $ and so by the definition of the fundamental domain
there exists $ a', s \in \mathcal{O} $ with $ \frac{a'}{c} \in D $ such that $ \frac{a'}{c} + s = \frac{a}{c} $.
Thus,
\[
\sigma =
\begin{bmatrix} 1 & s \\ 0 & 1 \end{bmatrix}
\cdot \begin{bmatrix} a' & b - sd \\ c & d \end{bmatrix}
= T_s \cdot \sigma'
\]
Of course, $ T_s $ is generated by $ S_\infty $.
Further, since $ \sigma(V) \cap V \neq \emptyset $,
the Euclidean diameter of the horoball $ \sigma(V) $ must exceed the height of $ V $,
\[ \frac{1}{h |c|^2} \geq h \]
so $ |c| \leq \frac{1}{h} $.
Also, we know $ \sigma \in \Gamma $ so $ a'd - c(b - sd) = 1 $ meaning $ \gcd(a', c) = 1 $.
From these two facts, we know there exist $ b', d' \in \mathcal{O} $ such that
\[
\tau = \begin{bmatrix}
a' & b' \\ 
c & d'
\end{bmatrix} \in \mathrm{Gens}(h)
\]
This leads to
\[
\sigma' = \begin{bmatrix} a' & b - sd \\ c & d \end{bmatrix}
= \begin{bmatrix} a' & b' \\ c & d' \end{bmatrix}
\cdot \begin{bmatrix} 1 & \frac{d - d'}{c} \\ 0 & 1 \end{bmatrix} = \tau \cdot T_{\frac{d - d'}{c}}
\]
and, obviously, $ T_{\frac{d - d'}{c}} $ is generated by $ S_\infty $ and $ \tau \in \mathrm{Gens}(h) $.
Finally, we conclude that every $ \sigma = T_s \cdot \tau \cdot T_{\frac{d - d'}{c}} $ is generated by $ \mathrm{Gens}(h) \cup S_\infty $.
\end{proof}

From the above theorem, we can find a generating set for $ \Gamma $
as long as we can find a height, $ h $, such that
\[
D \subseteq \bigcup_{|c| \leq \frac{1}{h}} \left( \bigcup_{\gcd(a, c) = 1, \frac{a}{c} \in \overline{D}}
\mathrm{Circ}\left( \frac{a}{c} \right) \right) .
\]
To check if the circles for a given height cover the fundamental domain,
we can use the method described in Appendix \ref{appendix:circlecover}.
\medskip

\begin{algorithm}
\caption{Find generators} \label{alg:find_gens}
\begin{algorithmic}
\Repeat
	\State Find denominators $ \left\{c \in \mathcal{O}_d \;:\; |c| < \frac{1}{h} \right\} $;
	
	\For{each denominator $ c $}
		\State Find all $ a \in \mathcal{O}_d $ coprime to $ c $;
		\State Calculate $ \sigma = \mathrm{Mat}\left( \frac{a}{c} \right) $;
		\State Add $ \sigma $ to $ \mathrm{Gens}(h) $;
		\State Add $ \mathrm{Circ}(\sigma) $ to $ \mathrm{Circles} $;
	\EndFor
\Until{$ D \subseteq \bigcup \mathrm{Circles} $}
\end{algorithmic}
\end{algorithm}

\subsection{Finding the Relations}

We now assume that we have an $ h $ such that $ \mathrm{Gens}(h) \cup S_\infty $ is a generating set for $ \Gamma $.
To obtain the relations,
we must find all the \emph{triple intersections}.
That is the intersections between $ V $, $ \sigma(V) $, and $ \tau(V) $ where $ \sigma, \tau \in \Gamma $.
This can be simplified substantially by removing certain redundancies.

\begin{lem} \label{decompLm}
For any $ \sigma \in \Gamma \setminus \Gamma_\infty $ such that $ \sigma(V) \cap V \neq \emptyset $,
\[ \sigma = T_r g T_s \]
for some $ g \in \mathrm{Gens}(h) $ and $ r, s \in \mathcal{O}_d $.
\end{lem}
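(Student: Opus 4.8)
The plan is to reuse, essentially verbatim, the three-factor decomposition that already appeared in the proof of the preceding theorem, now reading it as a statement about a single element rather than about the whole generating set. Since $\sigma \in \Gamma \setminus \Gamma_\infty$ we have $c \neq 0$. First I would use the fundamental domain $D$ of $\mathbb{C}/\mathcal{O}$ to choose $r \in \mathcal{O}$ with $\frac{a}{c} - r \in D$; writing $a' = a - rc$ this gives $\frac{a'}{c} \in D$ and a factorization $\sigma = T_r \sigma'$, where $\sigma' = \begin{bmatrix} a' & b - rd \\ c & d \end{bmatrix}$ has the same bottom row as $\sigma$ (one checks $T_r \sigma' = \sigma$ directly).

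Next I would produce the middle factor $g \in \mathrm{Gens}(h)$. The hypothesis $\sigma(V) \cap V \neq \emptyset$ forces the Euclidean diameter $\frac{1}{h|c|^2}$ of $\sigma(V)$, computed in \cref{diamLm}, to be at least the height $h$ of $V$, so $|c| \leq \frac{1}{h}$. Expanding $\det \sigma' = 1$ shows $a'd - c(b - rd) = 1$, whence $\gcd(a', c) = 1$. Together with $\frac{a'}{c} \in D$, these are exactly the conditions defining $\mathrm{Gens}(h)$, so $g = \mathrm{Mat}\!\left(\frac{a'}{c}\right) = \begin{bmatrix} a' & b' \\ c & d' \end{bmatrix} \in \mathrm{Gens}(h)$.

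Finally I would recover the right factor by comparing $\sigma'$ and $g$, which share the first column $(a', c)^{\mathrm{T}}$ and both have determinant one. Then $g^{-1}\sigma'$ fixes $\infty$: a direct multiplication shows its bottom-left entry is $-ca' + a'c = 0$ and its two diagonal entries are the determinants of $g$ and of $\sigma'$, both equal to $1$, so $g^{-1}\sigma' = T_s$ with $s = \frac{d - d'}{c}$. Taking $g$, $r$, and $s$ as above then yields $\sigma = T_r\, g\, T_s$, as desired. The one point needing care is that $s$ is a genuine element of $\mathcal{O}$ rather than merely of $\mathbb{Q}(\sqrt{d})$, and I expect this to be the only real obstacle; I would dispatch it by observing that $g^{-1}\sigma'$ is a product of two elements of $SL_2(\mathcal{O})$ and hence itself lies in $SL_2(\mathcal{O})$, so in particular its translation parameter $s$ lies in $\mathcal{O}$.
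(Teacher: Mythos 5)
Your proposal is correct and follows essentially the same route as the paper's own proof: factor off $T_r$ using the fundamental domain, combine the diameter bound of \cref{diamLm} with $\det \sigma' = 1$ to place $g = \mathrm{Mat}\left(\frac{a'}{c}\right)$ in $\mathrm{Gens}(h)$, and check that $g^{-1}\sigma'$ is upper triangular with unit diagonal, hence a translation $T_s$. If anything your treatment of the last step is more careful than the paper's, which asserts the explicit (and incorrect) value $s = -rdd'$ for the top-right entry $(b - rd)d' - b'd$, whereas your observation that $g^{-1}\sigma' \in SL_2(\mathcal{O})$ forces $s \in \mathcal{O}$ settles the only delicate point cleanly.
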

\begin{proof}
Let $ \sigma = \begin{bmatrix} a & b \\ c & d \end{bmatrix} $ for $ a, b, c, d \in \mathcal{O}_d $.
First, by definition of the fundamental domain, there exists $ f \in D $ with $ \frac{a}{c} \in f + \mathcal{O}_d $.
So, $ a = cf + cr $ for $ r \in \mathcal{O}_d $ and take $ a' = cf $ so $ \frac{a'}{c} \in D $.
Then,
\[
\sigma = \begin{bmatrix} 1 & r \\ 0 & 1 \end{bmatrix}
\cdot \begin{bmatrix} a' & b - rd \\ c & d \end{bmatrix}
= T_r \cdot \sigma' .
\]

In order for $ \sigma(V) \cap V \neq \emptyset $, we know that the horoball $ \sigma(V) $ must be large enough.
Thus, its Euclidean diameter must be greater than $ h $ which by \cref{diamLm} means
\[ \frac{1}{h |c|^2} \geq h \]
Then, $ |c| \leq \frac{1}{h} $.
So by the definition of $ \mathrm{Gens}(h) $, there exists $ g = \mathrm{Mat}\left( \frac{a'}{c} \right) \in \mathrm{Gens}(h) $
with $ g = \begin{bmatrix} a' & b' \\ c & d' \end{bmatrix} $.
Then,
\[
g^{-1} \sigma' = \begin{bmatrix} d' & -b' \\ -c & a' \end{bmatrix} \cdot \begin{bmatrix} a' & b - rd \\ c & d \end{bmatrix}
= \begin{bmatrix} a'd' - b'c & (b - rd) d' - b'd \\ -a'c + a'c & - (b - rd) c + a'd \end{bmatrix}
= \begin{bmatrix} 1 & -rdd' \\ 0 & 1 \end{bmatrix} = T_s
\]
where $ s = -rdd' $.
Finally, $ \sigma = T_r \cdot \sigma' = T_r \cdot g \cdot T_s $.
\end{proof}

\begin{thm}
Assuming that the commutation relation between the generators of $ \Gamma_\infty $ is given,
all relations produced using Macbeath's Theorem are equivalent to a relation of one of the following forms
\[ A^{-1} T_a B = T_b C T_c \]
\[ A^{-1} = T_b C T_c \]
where $ A, B, C \in \mathrm{Gens}(h) $ and $ a, b, c \in \mathcal{O}_d $.
Further, these relations correspond to the triple intersections
$ A(V) \cap (T_a B)(V) \cap V $ and $ A(V) \cap V \cap V $, respectively.
\end{thm}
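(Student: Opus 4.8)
The plan is to start from the raw shape of a Macbeath relation and rewrite it using the decomposition of \cref{decompLm}. By Macbeath's Theorem every relation reads $\alpha^{-1}\beta = \gamma$, where $\alpha,\beta,\gamma$ all lie in $S = \{g\in\Gamma : g(V)\cap V\neq\emptyset\}$ and the triple intersection $\alpha(V)\cap\beta(V)\cap V$ is nonempty. Three structural facts drive the argument. First, every translation $T_s\in\Gamma_\infty$ fixes $V$ setwise, so $T_s(V)=V$; consequently right-multiplying $\alpha$ or $\beta$ by a translation, or left-multiplying the entire triple intersection by a translation, changes neither the horoballs $\alpha(V),\beta(V),V$ nor the nonemptiness of their intersection. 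Second, by \cref{decompLm} each $g\in S$ is either an element $T_s$ of $\Gamma_\infty$ or factors as $T_rGT_s$ with $G\in\mathrm{Gens}(h)$. Third, since $\Gamma_\infty$ is a subgroup and $\beta=\alpha\gamma$, the number of $\alpha,\beta,\gamma$ lying outside $\Gamma_\infty$ is $0$, $2$, or $3$; it cannot be exactly $1$, since any two of them in $\Gamma_\infty$ would force the third into $\Gamma_\infty$ as well.

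I would then split on this count. When all three lie in $\Gamma_\infty$, the relation is a word in the generators $T_1,T_\omega$ of $\Gamma_\infty$ equal to the identity, hence a consequence of their commutation relation, which the statement assumes is already given; such relations contribute nothing new. For the remaining two counts, I substitute the factorizations from \cref{decompLm} and simplify using \emph{only} the commutativity of translations, absorbing the surrounding $T_r,T_s$ into a single left translation by means of the two $V$-preserving moves above.

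In the generic case, where $\alpha,\beta,\gamma$ all lie outside $\Gamma_\infty$, exactly one of the three horoballs (namely $V$) is based at $\infty$. After clearing the trailing translation on $\alpha$ and the common left translation, I can take $\alpha=A\in\mathrm{Gens}(h)$ and $\beta=T_aB$ with $B\in\mathrm{Gens}(h)$; then $\gamma=\alpha^{-1}\beta=A^{-1}T_aB$ again satisfies $\gamma(V)\cap V\neq\emptyset$ and is not in $\Gamma_\infty$, so \cref{decompLm} rewrites it as $T_bCT_c$, producing $A^{-1}T_aB=T_bCT_c$ and matching the triple intersection $A(V)\cap(T_aB)(V)\cap V$. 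This is the first form. When exactly two of $\alpha,\beta,\gamma$ lie outside $\Gamma_\infty$, one of the three horoballs coincides with $V$, so the triple intersection collapses to a genuine double intersection $A(V)\cap V$; reseating the configuration so that the repeated horoball is $V$ and decomposing the single nontrivial element gives $A^{-1}=T_bCT_c$, matching $A(V)\cap V\cap V$. This is the second form.

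I expect the main obstacle to be the translation bookkeeping inside this case split, namely checking that in every subcase the leftover $T_r,T_s$ really collapse to a single $T_a$ (or disappear) with all indices landing in $\mathcal{O}_d$, and in particular handling the degenerate subcase where $\gamma\in\Gamma_\infty$ but $\alpha,\beta\notin\Gamma_\infty$ (two finite horoballs based at the same point), whose naive rewriting $A^{-1}T_aB=T_b$ has a pure-translation right-hand side and so appears to escape both listed forms. The work there is to show, by conjugating the triple intersection by $\beta^{-1}$ so that the repeated-basepoint horoball becomes $V$, that such a relation is in fact equivalent to an instance of the second form, or else is itself a consequence of the $\Gamma_\infty$ commutation relation. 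A secondary check is that each rewriting preserves nonemptiness of the triple intersection, which holds because every move is multiplication by a $V$-stabilizing translation.
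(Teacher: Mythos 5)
Your proposal is correct and, at its core, is the same argument as the paper's: both feed each non-parabolic member of a Macbeath triple through \cref{decompLm} and absorb the stray translations via the commutation relation, with the case split governed by which of $ \sigma $, $ \tau $, $ \phi = \sigma^{-1}\tau $ lie in $ \Gamma_\infty $. Your $0/2/3$ counting is a tidier organization of the paper's four cases, and it honestly covers the subcase $ \sigma \in \Gamma_\infty $, $ \tau \notin \Gamma_\infty $, which the paper passes over in silence (it reduces to the treated case via $ a^{-1}b = c \leftrightarrow b^{-1}a = c^{-1} $). The one genuine divergence is exactly the degenerate subcase you flag, $ \phi \in \Gamma_\infty $ with $ \sigma, \tau \notin \Gamma_\infty $: the paper reduces it to $ A T_u = T_v B $ with $ A, B \in \mathrm{Gens}(h) $ and kills it outright --- comparing matrix entries gives $ c_1 = c_2 $ and $ a_1 - a_2 = v c_2 $, so the two basepoints lie in $ D $ and differ by an element of $ \mathcal{O}_d $, hence coincide, forcing $ v = 0 $, $ A = B $ (one matrix per basepoint), $ u = 0 $: the relation is tautological. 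Your alternative reseating also succeeds, and more directly than your hedge suggests: writing the relation as $ \tau = \sigma T_p $ and decomposing $ \sigma^{-1} = T_u A' T_v $ (legitimate, since $ \sigma^{-1}(V) \cap V \neq \emptyset $ whenever $ \sigma(V) \cap V \neq \emptyset $) together with $ \tau = T_x B T_y $ yields $ (A')^{-1} = T_{v + x} B T_{y + u - p} $, a genuine instance of the second form attached to the intersection $ A'(V) \cap V \cap V $; the fallback ``or else a consequence of the commutation relation'' is never needed. The trade-off is minor but real: the paper's fundamental-domain argument shows these degenerate triples contribute no new relations at all, whereas your version shows they are subsumed by the inverse relations $ A^{-1} = T_b C T_c $ that the algorithm records anyway --- either resolution proves the theorem.
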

\begin{proof}
Suppose we have the triple intersection $ \sigma(V) \cap \tau(V) \cap V $ with $ \sigma, \tau \in \Gamma $.
First, we consider the trivial case $ \sigma(V) = \tau(V) = V $ so $ \sigma, \tau \in \Gamma_\infty $.
So let $ \sigma = T_1^a T_\omega^b  $ and $ \tau = T_1^c T_\omega^d $ for some $ a, b, c, d \in \mathbb{Z} $.
Then, clearly, the corresponding relation $ T_1^a T_\omega^b \cdot T_1^c T_\omega^d = T_1^{a + c} T_\omega^{b + d} $
is equivalent to the commutation relation $ T_1 \cdot T_\omega = T_\omega \cdot T_1 $.
\medskip

Before treating the other cases, we will take $ \phi = \sigma^{-1} \tau $ and notice
\[
\sigma^{-1}(\sigma(V) \cap \tau(V) \cap V) = V \cap \phi(V) \cap \sigma^{-1}(V) \neq \emptyset.
\]
We conclude $ \phi(V) $ intersects $ V $.

Second, we consider when $ \sigma(V) \neq V $ and $ \tau(V) = V $.
Thus, $ \tau \in \Gamma_\infty $ so $ \tau = T_t $.
Also, $ \sigma \notin \Gamma_\infty $ which means $ \phi = \sigma^{-1} \tau \notin \Gamma_\infty $.
Hence, because $ \sigma(V) \cap V \neq \emptyset $ and $ \phi(V) \cap V \neq \emptyset $,
using \cref{decompLm}, we know $ \sigma = T_r A T_s $ and $ \phi = T_u C T_v $
for $ A, C \in \mathrm{Gens}(h) $.
Then,
\[ T_u C T_v = \phi = \sigma^{-1} \tau = T_{-s} A^{-1} T_{-r} T_t \]
\[ A^{-1} = T_{u + s} C T_{v + r - t}. \]
Taking $ b = u + s $ and $ c = v + r - t $, we get the second form from the Lemma's statement.

Notice that
\[
T_{-r} (\sigma(V) \cap \tau(V) \cap V) T_{-s} = A(V) \cap T_{t - r - s}(V) \cap T_{- r - s}(V) = A(V) \cap V \cap V
\].
Then, the relation can be formed from the triple intersection $ A(V) \cap V \cap V \neq \emptyset $.
\medskip

Third, we consider when $ \sigma(V) \neq V \neq \tau(V) $ and $ \phi \in \Gamma_\infty $ so $ \phi = T_p $.
Using \cref{decompLm}, $ \sigma = T_r A T_s $ and $ \tau = T_x B T_y $.
Then,
\[ T_p = T_{-s} A^{-1} T_{-r} T_x B T_y \]
\[ A T_{p + s - y} = T_{x - r} B . \]
Now, take $ u = p + s - y $ and $ v = x - r $
as well as let $ A = \begin{bmatrix} a_1 & b_1 \\ c_1 & d_1 \end{bmatrix} $
and $ B = \begin{bmatrix} a_2 & b_2 \\ c_2 & d_2 \end{bmatrix} $.
Then,
\[
A T_u = \begin{bmatrix} a_1 & ua_1 + b_1 \\ c_1 & uc_1 + d_1 \end{bmatrix}
= \begin{bmatrix} a_2 + vc_2 & b_2 + vd_2 \\ c_2 & d_2 \end{bmatrix} = T_v B .
\]
so $ c_1 = c_2 $ and $ a_1 - a_2 = v c_2 $
which means $ \frac{a_1}{c_1} - \frac{a_2}{c_2} = \frac{a_1 - a_2}{c_2} \in \mathcal{O}_d $.
Since $ \frac{a_1}{c_1}, \frac{a_2}{c_2} \in D $, their difference must be zero and $ v = 0 $.
But if $ \frac{a_1}{c_1} = \frac{a_2}{c_2} $ then $ A = B $
since $ \mathrm{Gens}(h) $ has only a single matrix for each basepoint.
So the relation becomes $ A T_u = T_v B = B = A $ which is tautological.
\medskip

Fourth, we consider when $ \sigma(V) \neq V \neq \tau(V) $ and $ \phi \notin \Gamma_\infty $.
Using \cref{decompLm}, $ \sigma = T_r A T_s $, $ \tau = T_x B T_y $, and $ \phi = T_u C T_v $.
Then,
\[ T_u C T_v = \phi = \sigma^{-1} \tau = T_{-s} A^{-1} T_{-r} T_x B T_y \]
\[ A^{-1} T_{x - r} B = T_{u + s} C T_{v - y}. \]
Taking $ a = x - r $, $ b = u + s $ and $ c = v - y $, we get the first form from the Lemma's statement.

Notice that
\[ T_{-r} (\sigma(V) \cap \tau(V) \cap V) T_{-s} = A(V) \cap (T_{x - r} B T_{y - s})(V) \cap T_{-r-s}(V) = A(V) \cap (T_a B)(V) \cap V \]
Then, the relation $ A^{-1} T_a B = T_b C T_c $ can be generated
from the triple intersection $ A(V) \cap (T_a B)(V) \cap V \neq \emptyset $.
\end{proof}

\begin{thm}
For all $ X, Y \in \mathrm{Gens}(h) $
with $ X = \mathrm{Mat}\left(\frac{a_x}{c_x}\right) $ and $ Y = \mathrm{Mat}\left(\frac{a_y}{c_y}\right) $,
if $ X(V) \cap (T_s Y)(V) \neq \emptyset $ for some $ s \in \mathcal{O}_d $
then
\[ \mathrm{Tang}(c_x) := \frac{1}{|c_x| h} + \left|1 + \omega\right| \geq |s| \]
\end{thm}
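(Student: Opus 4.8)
The plan is to turn the hypothesis $X(V) \cap (T_s Y)(V) \neq \emptyset$ into a concrete bound on the Euclidean distance between the basepoints of the two horoballs, and then peel off $|s|$ using the triangle inequality. First I would record the two horoballs explicitly. Since $X = \mathrm{Mat}\left(\frac{a_x}{c_x}\right)$, \cref{diamLm} tells us that $X(V)$ is the Euclidean ball tangent to the boundary plane at $\frac{a_x}{c_x}$ with diameter $\frac{1}{h|c_x|^2}$, hence radius $r_x = \frac{1}{2h|c_x|^2}$. For the second horoball, the crucial observation is that left-multiplying $Y$ by $T_s$ leaves the lower-left entry $c_y$ unchanged while shifting the basepoint by $s$: the first column of $T_s Y$ is $(a_y + s c_y,\, c_y)$, so $(T_s Y)(V)$ is based at $\frac{a_y}{c_y} + s$ and still has radius $r_y = \frac{1}{2h|c_y|^2}$.

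Next I would invoke the intersection criterion of \cref{tangLm}. Both balls are tangent to the boundary plane, so they meet exactly when the distance $d$ between their tangency points satisfies $d^2 \le 4 r_x r_y$. Substituting the radii gives $d^2 \le \frac{1}{h^2 |c_x|^2 |c_y|^2}$, that is, $d = \left|\frac{a_x}{c_x} - \left(\frac{a_y}{c_y} + s\right)\right| \le \frac{1}{h|c_x||c_y|}$. Now the triangle inequality isolates $s$, since $|s| \le d + \left|\frac{a_x}{c_x} - \frac{a_y}{c_y}\right| \le \frac{1}{h|c_x||c_y|} + \left|\frac{a_x}{c_x} - \frac{a_y}{c_y}\right|$.

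It remains to bound the two leftover quantities uniformly in $Y$. Because $X, Y \in \mathrm{Gens}(h)$, both basepoints $\frac{a_x}{c_x}$ and $\frac{a_y}{c_y}$ lie in the fundamental domain $D$, so their separation is at most the diameter of $D$. Taking $D$ to be the fundamental parallelogram with vertices $0, 1, \omega, 1+\omega$, its diameter is the length of the longer diagonal, namely $\max(|1+\omega|, |1-\omega|)$; since $|1+\omega|^2 - |1-\omega|^2 = 4\,\mathrm{Re}(\omega) \ge 0$ ($\omega$ being either purely imaginary or of real part $\tfrac12$), this diameter equals $|1+\omega|$. Finally, since $c_y$ is a nonzero algebraic integer we have $|c_y|^2 = \norm(c_y) \ge 1$, so $\frac{1}{h|c_x||c_y|} \le \frac{1}{h|c_x|}$. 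Combining these estimates yields $|s| \le \frac{1}{h|c_x|} + |1+\omega| = \mathrm{Tang}(c_x)$, as claimed.

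I expect the only genuinely delicate point to be the diameter computation for $D$: one must commit to the parallelogram fundamental domain and check that $|1+\omega|$ rather than $|1-\omega|$ is the larger diagonal across all the admissible fields, which reduces to the sign of $\mathrm{Re}(\omega)$. The deliberate looseness of bounding $|c_y|$ below by $1$ is also worth flagging, since it is precisely what makes the final estimate depend only on $c_x$; this is essential because $\mathrm{Tang}(c_x)$ is meant to cap the search over $s$ for a fixed $X$ before the partner generator $Y$ is known.
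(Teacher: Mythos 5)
Your proposal is correct and follows essentially the same route as the paper: shift the basepoint by $s$ via the first column of $T_s Y$, apply the tangency criterion of \cref{tangLm} with the diameters from \cref{diamLm}, isolate $|s|$ by the triangle inequality, and drop the dependence on $Y$ via $|c_y| \geq 1$. Your diameter-of-$D$ bound $\max\left(|1+\omega|, |1-\omega|\right) = |1+\omega|$ is a geometric repackaging of the paper's coordinate estimate $u^2 + uv(\omega + \overline{\omega}) + v^2 |\omega|^2 \leq |1+\omega|^2$, which silently relies on the same fact $\mathrm{Re}(\omega) \geq 0$ that you flag explicitly.
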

\begin{proof}
Suppose $ X = \begin{bmatrix} a_x & b_x \\ c_x & d_x \end{bmatrix} $ and $ Y = \begin{bmatrix} a_y & b_y \\ c_y & d_y \end{bmatrix} $.
Then,
\[ T_s Y = \begin{bmatrix} a_y + sc_y & b_y + sd_y \\ c_y & d_y \end{bmatrix}. \]
Using \cref{diamLm}, we know $ X(V) $ and $ (T_s Y)(V) $
will have Euclidean diameters of $ \frac{1}{|c_x|^2 h} $ and $ \frac{1}{|c_y|^2 h} $.
Using \cref{tangLm}, since $ X(V) \cap (T_s Y)(V) \neq \emptyset $, we know
\[ \left| \frac{a_x}{c_x} - \frac{a_y + sc_y}{c_y} \right|^2 \leq 4 \frac{1}{2 |c_x|^2 h} \frac{1}{2 |c_y|^2 h} = \frac{1}{|c_x|^2 |c_y|^2 h^2} \]
\medskip

Now, because $ \frac{a_x}{c_x}, \frac{a_y}{c_y} \in D $,
we know $ \frac{a_x}{c_x} = \alpha + \beta \omega $ and $ \frac{a_y}{c_y} = \gamma + \delta \omega $.
where $ 0 \leq \alpha, \beta, \gamma, \delta < 1 $.
So taking $ \frac{a_x}{c_x} - \frac{a_y}{c_y} = (\alpha - \gamma) + (\beta - \delta)\omega = u + v\omega $.
We have $ 0 - 1 < u, v < 1 - 0 $ so $ u, v \in (-1, 1) $ meaning $ u^2, v^2 < 1 $ and $ uv \in (-1, 1) $.
Then,
\[
\left| \frac{a_y}{c_y} - \frac{a_x}{c_x} \right|^2 = (u + v\omega)(u + v\overline{\omega})
= u^2 + uv (\omega + \overline{\omega}) + v^2 |\omega|^2
\leq 1 + (\omega + \overline{\omega}) + |\omega|^2 = |1 + \omega|^2 .
\]
Then, by the triangle inequality,
\[
|s| \leq \left| \frac{a_x}{c_x} - \frac{a_y + sc_y}{c_y} \right| + \left| \frac{a_y}{c_y} - \frac{a_x}{c_x} \right| \leq \frac{1}{|c_x| |c_y| h} + |1 + \omega|
\]
Further, since $ c_y \neq 0 $ and $ c_y \in \mathcal{O}_d $, we know $ |c_y| \geq 1 $ so
\[ |s| \leq \frac{1}{|c_x| h} + \left|1 + \omega\right| \]
\end{proof}

From the above bound and the fact that $ \mathrm{Gens}(h) $ is finite,
it is clear that all possible $ A, B \in \mathrm{Gens}(h) $ and $ a \in \mathcal{O}_d $
can be searched for with
\[ A(V) \cap (T_a B)(V) \cap V \neq \emptyset \]
in finite time.

To actually determine the relations, we need to consider

\begin{algorithm}
\caption{Find relations} \label{alg:find_relats}
\begin{algorithmic}
\For{each denominator $ c $}
	\State $ S \gets \{ s \in \mathcal{O}_d \;:\; |s| \leq \mathrm{Tang}(c) \} $;
	\For{each $ A \in \mathrm{Gens}(h) $ where the basepoint of $ A $ has denominator $ c $}
		\State Find $ r, t \in \mathcal{O}_d $ and $ C \in \mathrm{Gens}(h) $ such that $ A^{-1} = T_r C T_t $;
		\State Add the relation $ A T_r C T_t = 1 $ to the list of relations;
		
		\For{each $ B \in \mathrm{Gens}(h) $}
			\For{each $ s \in S $}
				\If{$ A(V) \cap (T_a B)(V) \cap V \neq \emptyset $}
					\State $ C' \gets A^{-1} T_a B $;
					\State Find $ r, t \in \mathcal{O}_d $ and $ C \in \mathrm{Gens}(h) $ such that $ C' = T_r C T_t $;
					\State Add the relation $ B^{-1} T_{-s} A T_r C T_t = 1 $ to the list of relations;
				\EndIf
			\EndFor
		\EndFor
	\EndFor
\EndFor
\end{algorithmic}
\end{algorithm}

\newpage
\section{Results}

We have found presentations for the cases of $ d = -43, -67, -163 $
which along with the previously known cases of $ d = -1, -2, -3, -7, -11, -19 $
(discovered by Swan, see \cite{Sw})
completely describes the singly cusped Bianchi groups.
The presentations given below for $ d = -1, -2, -3, -7, -11, -19 $
come from Finis, Grunewald, and Tirao, see \cite{FGT}.

The abelianizations for the groups are
\[ \begin{split}
\ab{\Gamma_{-1}} &= C_2 \times C_2 \\
\ab{\Gamma_{-2}} &= C_6 \times C_\infty \\
\ab{\Gamma_{-3}} &= C_3 \\
\ab{\Gamma_{-7}} &= C_2 \times C_\infty \\
\ab{\Gamma_{-11}} &= C_3 \times C_\infty \\
\ab{\Gamma_{-19}} &= C_\infty \\
\ab{\Gamma_{-43}} &= C_\infty^2 \\
\ab{\Gamma_{-67}} &= C_\infty^3 \\
\ab{\Gamma_{-163}} &= C_\infty^7 \\
\end{split} \]
where $ C_n $ is the group with order $ n $ generated by a single element.

Clearly from the description of this method,
many redundant generators and relations are produced before simplifying the relation.
To perform this simplification, we used the Magma Computational Algebra System.
The precise commands involved in this can be found
in the files \texttt{simpl43.mgm}, \texttt{simpl67.mgm}, and \texttt{simpl163.mgm} on the GitHub page.
In the below table, the ``Raw" columns list the numbers of generators and relations
for the group presentations before simplification.
The ``Simplified" columns list the numbers for the presentations after being simplified by Magma.

\begin{center} \begin{tabular}{| c | c | c | c | c | c | c |}
\hline
\multirow{2}{*}{Ring} & \multicolumn{2}{c|}{Simplified} & \multicolumn{2}{c|}{Raw}
  & \multirow{2}{*}{Height} & \multirow{2}{*}{Depth} \\
\cline{2-5}
& Generators & Relations & Generators & Relations & & \\
\hline
$ \mathcal{O}_{-2} $ & 3 & 4 & 10 & 78 & 0.5000 & 4 \\
$ \mathcal{O}_{-7} $ & 3 & 4 & 10 & 52 & 0.5000 & 4 \\
$ \mathcal{O}_{-11} $ & 3 & 4 & 18 & 186 & 0.4220 & 5 \\
$ \mathcal{O}_{-19} $ & 4 & 7 & 34 & 407 & 0.3218 & 9 \\
$ \mathcal{O}_{-43} $ & 5 & 10 & 146 & 1986 & 0.2071 & 23 \\
$ \mathcal{O}_{-67} $ & 8 & 15 & 218 & 3311 & 0.1690 & 35 \\
$ \mathcal{O}_{-163} $ & 11 & 18 & 1290 & 25997 & 0.0982 & 103 \\
\hline
\end{tabular} \end{center}

Here, the ``Height" refers to the Euclidean height, $ h $, of the main horoball $ V $,
The ``Depth" is
\[ \max\left\{ |c|^2 \;:\; \begin{bmatrix} a & b \\ c & d \end{bmatrix} \in \mathrm{Gens}(h)\right\} \]
and so give some indication about the number of cases that needed to be searched to find the generators.

We can represent the generators in the following presentations with the matrices
\[
A = \begin{bmatrix} 1 & 1 \\ 0 & 1 \end{bmatrix} \qquad
B = \begin{bmatrix} 0 & 1 \\ -1 & 0 \end{bmatrix} \qquad
U = \begin{bmatrix} 1 & \omega \\ 0 & 1 \end{bmatrix}.
\]
For $ d = -1, -2, -3, -7, -11 $, we have
\[
\Gamma_{-1} = \langle
	A, B, U \mid
	B^2, (AB)^3, (BUBU^{-1})^3,
	AUA^{-1}U^{-1}, (BU^2BU^{-1})^2,
	(AUBAU^{-1}B)^2
\rangle
\]
\[
\Gamma_{-2} = \langle
	A, B, U \mid
	B^2, (AB)^3,
	AUAU^{-1}, (BU^{-1}BU)^2
\rangle
\]
\[ \begin{split}
\Gamma_{-3} &= \langle
	A, B, U \mid
	B^2, (AB)^3, AUA^{-1}U^{-1}, \\
	&(UBA^2U^{-2}B)^2, (UBAU^{-1}B)^3,
	AUBAU^{-1}BA^{-1}UBA^{-1}UBAU^{-1}B
\rangle
\end{split} \]
\[
\Gamma_{-7} = \langle
	A, B, U \mid
	B^2, (BA)^3, AUA^{-1}U^{-1},
	(BAU^{-1}BU)^2
\rangle
\]
\[
\Gamma_{-11} = \langle
	A, B, U \mid
	B^2, (BA)^3, AUA^{-1}U^{-1},
	(BAU^{-1}BU)^3
\rangle.
\]
For $ d = -19 $, we can represent the $ C $ generator by the matrix
\[ C = \begin{bmatrix} 1 - \omega & 2 \\ 2 & \omega \end{bmatrix} \].
Then, the presentation is
\[
\Gamma_{-19} = \langle
	A, B, C, U \mid
	B^2, (BA)^3, AUA^{-1}U^{-1},
	C^3, (CA^{-1})^3, (BC)^2,
	(BA^{-1} UCU^{-1})^2
\rangle.
\]
For $ \mathcal{O}_{-43} $, we can represent $ R $ and $ S $ with the matrices
\[
R = \begin{bmatrix} 1 + \omega & \omega - 6 \\ 2 & \omega \end{bmatrix}
\qquad
S = \begin{bmatrix} 3 & \omega \\ 1 - \omega & 4 \end{bmatrix}.
\]
Then, the presentation is
\[ \begin{split}
\Gamma_{-43} &= \langle
	A, B, U, R, S \mid \\
	&B^2, (BA)^3, AUA^{-1}U^{-1}, (R^{-1} U)^3, \\
	&A^{-1} R A^{-1} U^{-1} R A^{-1} R^{-1} U R^{-1}, \\
	&A^{-1} R U^{-1} S R^{-1} U B S^{-1} B, \\
	&(U A S^{-1} B S R^{-1})^2, \\
	&(B S U^{-1} R S^{-1} A)^2, \\
	&A^{-1} U^{-1} S R^{-1} U S^{-1} B A^{-1} S R^{-1} U S^{-1} B U, \\
	&U B U^{-1} A^{-1} R S^{-1} B S A^{-1} B S^{-1} B U R^{-1} A S R^{-1}
\rangle.
\end{split} \]

For $ \mathcal{O}_{-67} $, the generators additional generators can be represented by
\[
K = \begin{bmatrix} 2\omega & -23 + 2\omega \\ 3 & 1 + 2\omega \end{bmatrix} \qquad
L = \begin{bmatrix} 2 + \omega & -4 + \omega \\ 4 & 1 + \omega \end{bmatrix} \qquad
M = \begin{bmatrix} 1 + 3\omega & -13 + \omega \\ 4 & \omega \end{bmatrix}
\]
\[
N = \begin{bmatrix} -2 + \omega & -10 \\ \omega & -9 + \omega \end{bmatrix} \qquad
P = \begin{bmatrix} 2 & \omega \\ 1 - \omega & 9 \end{bmatrix}.
\]
The presentation is
\[ \begin{split}
\Gamma_{-67} &= \langle
	A, B, U, K, L, M, N, P \mid \\
	&B^2, A U A^{-1} U^{-1}, \\
	&U P^{-1} A^{-1} N P^{-1} B, \\
	&(B U^{-1} M)^2, (A B)^3, (M U^{-1})^3, \\
	&B U P^{-1} B U N^{-1} A, \\
	&P^{-1} A^{-1} N A N^{-1} B A^{-1} P A, \\
	&L^{-1} B A^{-1} L U^{-1} K P^{-1} B U K^{-1} U, \\
	&L B U^{-1} M L^{-1} B A^{-1} N K^{-1} U M^{-1} K A^{-1} U^{-1}, \\
	&A U L^{-1} A B L M^{-1} U A L^{-1} A B L M^{-1}, \\
	&L^{-1} A U K^{-1} U P^{-1} B A^{-1} K A^{-1} U^{-1} L B A, \\
	&U^{-1} L B U^{-1} M L^{-1} A U K^{-1} M U^{-1} K P^{-1} A^{-1} N, \\
	&(M L^{-1} B A^{-1} L P^{-1} B A^{-1})^3, \\
	&(L^{-1} A B L M^{-1} A B A^{-1} N)^3
\rangle.
\end{split} \]

\newpage
For $ \mathcal{O}_{-163} $, we can represent the additional generators with
\[
D = \begin{bmatrix} 1 + \omega & -21 + \omega \\ 2 & \omega \end{bmatrix} \qquad
E = \begin{bmatrix} 1 + \omega & -10 + \omega \\ 4 & 2 + \omega \end{bmatrix} \qquad
F = \begin{bmatrix} 3 + \omega & -23 + 3\omega \\ 5 & 3 + 3\omega \end{bmatrix} \qquad
G = \begin{bmatrix} 3 + \omega & -6 + \omega \\ 6 & 2 + \omega \end{bmatrix}
\]
\[
H = \begin{bmatrix} 15 & 4\omega \\ 1 - \omega & 11 \end{bmatrix} \qquad
T = \begin{bmatrix} 31 & 3\omega \\ 1 - \omega & 4 \end{bmatrix} \qquad
V = \begin{bmatrix} -7 + \omega & -12 \\ \omega & -6 + \omega \end{bmatrix} \qquad
W = \begin{bmatrix} 2 + 4\omega & -23 + 2\omega \\ 7 & 2 + \omega \end{bmatrix}.
\]
The presentation is
\[ \begin{split}
\Gamma_{-163} &= \langle A, U, B, D, E, F, G, H, T, V, W \mid \\
	&B^2, A U A^{-1} U^{-1}, (U D^{-1})^3, (A B)^3, \\
	&(T B U^{-1})^3, A^{-1} D^{-1} U D^{-1} A^{-1} D U^{-1} A^{-1} D, \\
	&D^{-1} W G^{-1} A^{-1} V A^{-1} B W^{-1} D U^{-1}
	W A V^{-1} G A^{-1} B W^{-1} U,
	(A E U^{-1} T B A E^{-1})^3, \\
	&B T^{-1} U H^{-1} U A F^{-1} V A^{-1} B A
	V^{-1} G B A G^{-1} V A^{-1} B A V^{-1} F
	A^{-1} U^{-1} H, \\
	&(V A^{-1} B U^{-1} T W^{-1} U D^{-1} W B A
	V^{-1} A B A^{-1})^2, \\
	&(E A^{-1} B T^{-1} U E^{-1} A^{-1} V A^{-1} B
	W^{-1} D U^{-1} W B A V^{-1})^2, \\
	&(H U^{-1} T B A E^{-1} A^{-1} V A^{-1} B W^{-1}
	D U^{-1} W B A V^{-1} F A^{-1} D^{-1} A U \\ 
	&\quad F^{-1} E A^{-1} U^{-1} T B H^{-1} U A F^{-1}
	V A^{-1} B A V^{-1} F U^{-1}), \\
	&(G E^{-1} H U^{-1} T B D^{-1} W G^{-1} E A^{-1}
	B T^{-1} U H^{-1} A^{-1} D U^{-1} H U^{-1} T B \\
	&\quad A E^{-1} G W^{-1} D B T^{-1} U H^{-1} E
	A^{-1} B W^{-1} U D^{-1} W G^{-1} B A^{-1}), \\
	&(A^{-1} E A^{-1} B T^{-1} U E^{-1} H U^{-1} T B
	U^{-1} W B A G^{-1} V A^{-1} B A V^{-1} F \\
	&\quad A^{-1} D^{-1} U A F^{-1} G W^{-1} D B T^{-1} U
	H^{-1} U A F^{-1} V A^{-1} B A V^{-1} F U^{-1}), \\
	&(A F^{-1} E A^{-1} B T^{-1} U E^{-1} A^{-1} V
	A^{-1} B W^{-1} D U^{-1} W B A V^{-1} F \\
	&\quad A^{-1} U^{-1} D A F^{-1} V A^{-1} B W^{-1} U
	D^{-1} W B A V^{-1} A E U^{-1} T B A E^{-1} F U^{-1} A^{-1} D), \\
	&(B A V^{-1} A B A^{-1} V A^{-1} B G^{-1} B
	A^{-1} G E^{-1} H U^{-1} T B D^{-1} W G^{-1} E \\
	&\quad A^{-1} B T^{-1} U H^{-1} A^{-1} D A F^{-1} V
	A^{-1} B A V^{-1} G A^{-1} B W^{-1} U A D^{-1} \\
	&\quad W G^{-1} F U^{-1} H U^{-1} T B A E^{-1} G
	W^{-1} D B T^{-1} U H^{-1} E), \\
	&(B T^{-1} U H^{-1} U F^{-1} V A^{-1} B A V^{-1}
	F A^{-1} U^{-1} H U^{-1} T B D^{-1} W G^{-1} \\
	&\quad F U^{-1} H U^{-1} T B A E^{-1} G W^{-1} D
	B T^{-1} U H^{-1} E B A V^{-1} A B A^{-1} \\
	&\quad V A^{-1} B E^{-1} H U^{-1} T B D^{-1} W G^{-1}
	E A^{-1} B T^{-1} U H^{-1} U F^{-1} G W^{-1} D A^{-1}), \\
	&(W G^{-1} F U^{-1} H U^{-1} T B A E^{-1} G
	W^{-1} D B T^{-1} U H^{-1} E U^{-1} T B A \\
	&\quad E^{-1} H U^{-1} T B D^{-1} W G^{-1} E A^{-1} B
	T^{-1} U H^{-1} A U F^{-1} V A^{-1} B A V^{-1} G A^{-1} \\
	&\quad B W^{-1} D U^{-1} A^{-1} W G^{-1} F U^{-1} H U^{-1} T B
	A E^{-1} G W^{-1} D B T^{-1} U H^{-1} E U^{-1} T B A E^{-1} \\
	&\quad H U^{-1} T B D^{-1} W G^{-1} E A^{-1} H^{-1} U A
	F^{-1} V A^{-1} B A V^{-1} G B A W^{-1} D U^{-1} A^{-1})
\rangle.
\end{split} \]

% Appendix
\newpage \appendix
\section{Circle Covering Algorithm} \label{appendix:circlecover}

In order to check if a set of horoballs (and their shifts) cover the boundary of $ V $,
we can instead check that the circular intersections of the horoballs cover the fundamental domain.
One could check this manually by drawing all of the circles,
but for hundreds of generators this becomes prohibitive.
Instead, we describe an algorithm that can verify if a set of circles covers a parallelogram.

To do this, we subdivide the fundamental domain into an $ n \times n $ grid of parallelograms $ \frac{1}{n^2} $ the size of the original.
We then check that each sub-parallelogram is entirely covered by one of the circles in the set.
If this is the case for all sub-parallelograms then certainly the fundamental domain is covered.
It should be noted, though, that this method can produce false negatives if the circles do not have sufficient overlap.

\begin{algorithm}
\caption{Check if a set of Generators is complete} \label{alg:check_gens}
\begin{algorithmic}
\State \[
\text{Subdivide $ D $ into }
P \gets \left\{\left[\frac{i}{n}, \frac{i + 1}{n}\right] + \left[\frac{j}{n}, \frac{j + 1}{n}\right]\omega \;:\; i, j \in \{0, 1, \ldots, n\}\right\}
\]

\For{each parallelogram $ A \in P $}
	\For{$ \sigma \in \mathrm{Gens}(h) $}
		\If{$ A \subseteq \mathrm{Circ}(\sigma) $}
			\State Go to next parallelogram;
		\EndIf
	\EndFor
	\If{No circles contain $ A $}
		\State $ \mathrm{Gens}(h) $ fails to be complete;
	\EndIf
\EndFor

\State $ \mathrm{Gens}(h) $ along with the generators of $ \Gamma_\infty $ are complete;
\end{algorithmic}
\end{algorithm}

\begin{center}
\includegraphics[width=0.85\textwidth]{./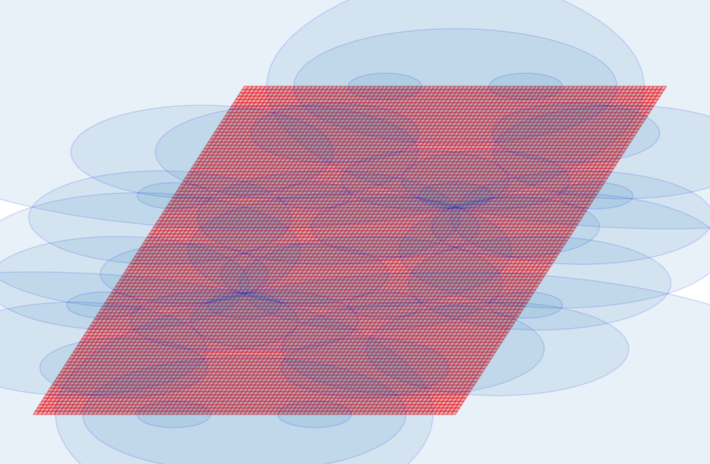}
\captionof{figure}{
Circles for each generator of $ \Gamma_{-19} $ covering the fundamental domain (red parallelogram).
The domain is split into a $ 100 \times 100 $ grid to apply the above algorithm.
}
\end{center}

\section{Finding Representatives of Residue Classes} \label{appendix:resrepr}

In the process of finding matrices with a given $ c $ value,
it is necessary to search through all residue classes of $ \mathcal{O}_d / c\mathcal{O}_d $.
To do this, we need representatives of each of these classes.

\begin{lem}
If $ c \in \mathcal{O}_d $ and $ D $ is a fundamental domain of $ \mathbb{C} / \mathcal{O}_d $ then
\[ R = \left\{ a \;:\; \frac{a}{c} \in D \right\} \]
is a complete set of representatives for $ c\mathcal{O}_d $.
\end{lem}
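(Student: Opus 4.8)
The plan is to read the claim as asserting that the reduction map $\Phi \colon R \to \mathcal{O}_d/c\mathcal{O}_d$, $a \mapsto a + c\mathcal{O}_d$, is a bijection, so that $R$ meets each residue class of the quotient ring $\mathcal{O}_d/c\mathcal{O}_d$ exactly once. The whole argument rests on a single algebraic observation, which I would record first: for $a, a' \in \mathcal{O}_d$,
\[ a \equiv a' \pmod{c\mathcal{O}_d} \iff \frac{a - a'}{c} \in \mathcal{O}_d \iff \frac{a}{c} \equiv \frac{a'}{c} \pmod{\mathcal{O}_d}. \]
In other words, the residue class of $a$ modulo $c$ is detected exactly by the coset of $\frac{a}{c}$ in $\mathbb{C}/\mathcal{O}_d$; equivalently, multiplication by $c$ gives an isomorphism $\mathcal{O}_d/c\mathcal{O}_d \cong (c^{-1}\mathcal{O}_d)/\mathcal{O}_d$, and $\{\frac{a}{c} : a \in R\}$ is precisely the set of points of the refined lattice $c^{-1}\mathcal{O}_d$ lying in $D$. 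Both halves of the bijection will follow from this identity together with the two defining properties of $D$.

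For surjectivity I would take an arbitrary class $a_0 + c\mathcal{O}_d$ and set $z = \frac{a_0}{c} \in \mathbb{C}$. By the existence property of the fundamental domain used in \cref{ballcoverLm}, there are $s \in \mathcal{O}_d$ and $r \in D$ with $z = s + r$. Writing $a = a_0 - cs \in \mathcal{O}_d$ gives $\frac{a}{c} = z - s = r \in D$, so $a \in R$, while $a - a_0 = -cs \in c\mathcal{O}_d$, so $\Phi(a) = a_0 + c\mathcal{O}_d$. Hence every class is represented.

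For injectivity I would suppose $a, a' \in R$ with $\Phi(a) = \Phi(a')$. By the observation above, $\frac{a}{c}$ and $\frac{a'}{c}$ are then two points of $D$ differing by an element of $\mathcal{O}_d$. This is exactly where the main subtlety lies: the property of $D$ quoted earlier guarantees only the \emph{existence} of a representative in $D$, not its uniqueness, so to conclude $\frac{a}{c} = \frac{a'}{c}$, and therefore $a = a'$, I must invoke that $D$ is a \emph{strict} fundamental domain, meaning each coset of $\mathbb{C}/\mathcal{O}_d$ meets $D$ in exactly one point. This is the convention in force for the half-open parallelogram used throughout the paper, and it is the only nonformal ingredient; once it is granted, injectivity is immediate, and the bijection — hence the assertion that $R$ is a complete set of representatives — follows. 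I would therefore state the uniqueness property of $D$ explicitly at the start of the proof so that the injectivity step reads cleanly.
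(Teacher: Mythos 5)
Your proposal is correct, and its surjectivity half is in substance identical to the paper's proof: given a class $x + c\mathcal{O}_d$, use the covering property of the fundamental domain (the one quoted in \cref{ballcoverLm}) to translate $\frac{x}{c}$ into $D$ by an element of $\mathcal{O}_d$, obtaining a representative $a$ with $\frac{a}{c} \in D$ — this is exactly the paper's construction $a = sc = ck + x$, up to naming. Where you differ is that the paper's proof \emph{stops there}: it shows every residue class meets $R$, but never addresses whether it meets $R$ exactly once, which is what ``complete set of representatives'' is usually taken to assert. Your injectivity step — if $a, a' \in R$ lie in the same class, then $\frac{a}{c}$ and $\frac{a'}{c}$ are points of $D$ differing by an element of $\mathcal{O}_d$, hence equal by strictness of $D$ — is thus an addition rather than a deviation, and you are right that it genuinely requires the strict (exactly-one-representative) reading of $D$, which the existence-only phrasing in \cref{ballcoverLm} does not supply. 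That convention is indeed in force in the paper: the half-open parallelogram $D = \{x + y\omega : 0 \leq x, y < 1\}$ is explicit in the division lemma immediately following this one in Appendix~\ref{appendix:resrepr} (where $0 \leq x' - \lfloor x' \rfloor < 1$), and the same uniqueness property is used without comment in the third case of the theorem classifying relations, where $\frac{a_1}{c_1}, \frac{a_2}{c_2} \in D$ with difference in $\mathcal{O}_d$ forces the difference to be zero. So your version is the more complete argument; the only point worth tidying is that both you and the paper implicitly assume $c \neq 0$ (otherwise $\frac{a}{c}$ is undefined and the statement degenerates), which you could state alongside the strictness hypothesis you already flag.
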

\begin{proof}
Suppose $ x + c \mathcal{O}_d $ is a residue class.
Then, by definition of the fundamental domain,
there exists $ s \in D $ such that $ s - \frac{x}{c} = k \in \mathcal{O}_d $.
Then, take $ a = sc = ck + x \in x + c\mathcal{O}_d $ so that $ \frac{a}{c} \in D $.
Thus, $ a \in R $ and $ a \sim x $ meaning every residue class is represented in $ R $.
\end{proof}

It is also necessary during the computations
to find the representative of the residue class that an arbitrary element belongs to.

\begin{lem}
For any $ a, c \in \mathcal{O}_d $,
there exist $ q, r \in \mathcal{O}_d $
such that $ a = qc + r $ and $ \frac{r}{c} \in D $.
\end{lem}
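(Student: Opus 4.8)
The plan is to reduce this to the defining property of the fundamental domain $D$ of $\mathbb{C}/\mathcal{O}_d$ introduced in \cref{ballcoverLm}. The statement is essentially a division-with-remainder result in which the remainder $r$ is normalized so that $r/c$ lands in $D$. I would treat $c \neq 0$ throughout, since $r/c$ is precisely the quantity being controlled and is meaningless otherwise.

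First I would form the complex number $a/c \in \mathbb{C}$. By the defining property of $D$ recorded in \cref{ballcoverLm}, there exist $q \in \mathcal{O}_d$ and $t \in D$ with $a/c = q + t$. I would then define $r = a - qc$ and verify the two required conclusions in turn. The identity $a = qc + r$ holds immediately by construction. To see $r \in \mathcal{O}_d$, I would observe that $a, q, c \in \mathcal{O}_d$ and that $\mathcal{O}_d$ is a ring, so $r = a - qc$ is again in $\mathcal{O}_d$. Finally, for $r/c \in D$, I would simply compute $r/c = (a - qc)/c = a/c - q = t$, which lies in $D$ by choice of $t$.

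The only real subtlety is that the fundamental domain supplies a decomposition $a/c = q + t$ in which the lattice part $q$ lies in $\mathcal{O}_d$; this is exactly what allows me to clear the denominator and keep $r = tc = a - qc$ inside $\mathcal{O}_d$. There is essentially no genuine obstacle, since all the content sits in the definition of $D$. The main thing to be careful about is to keep the two objects distinct, namely the complex number $t \in D$ on the one hand and the ring element $r \in \mathcal{O}_d$ on the other, and to note that the case $c = 0$ is excluded because $r/c$ would then be undefined.
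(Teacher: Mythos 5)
Your proof is correct, but it takes a genuinely different route from the paper's. You invoke the defining property of the fundamental domain recorded in \cref{ballcoverLm} --- every $ z \in \mathbb{C} $ decomposes as $ z = q + t $ with $ q \in \mathcal{O}_d $ and $ t \in D $ --- applied to $ z = a/c $, and then clear the denominator to obtain $ r = a - qc = tc \in \mathcal{O}_d $. The paper instead argues constructively: writing $ a = x_a + y_a \omega $ and $ c = x_c + y_c \omega $, and using the trace $ \omega + \overline{\omega} $ and norm $ \omega \overline{\omega} $ of $ \omega $, it expands $ \frac{a}{c} = \frac{a \overline{c}}{|c|^2} $ in the basis $ \{1, \omega\} $ as $ x' + y' \omega $, takes $ q = \lfloor x' \rfloor + \lfloor y' \rfloor \omega $, and checks directly that $ \frac{r}{c} = (x' - \lfloor x' \rfloor) + (y' - \lfloor y' \rfloor)\omega $ has both coordinates in $ [0, 1) $, hence lies in $ D $ (taken to be the standard half-open parallelogram). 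Your argument is shorter and works verbatim for any fundamental domain; what the paper's computation buys is an explicit formula for $ q $, which is the real point here: the lemma sits in Appendix \ref{appendix:resrepr}, whose purpose is to describe how the representative of a residue class is actually computed during the search for matrices, so a bare existence statement would not serve the implementation. Note also that the paper's calculation in effect verifies the decomposition property for the standard parallelogram, whereas you take that property as given --- legitimate, since the paper states it as the defining property of $ D $ --- and your explicit exclusion of $ c = 0 $ matches the paper's implicit assumption, as its formula divides by $ |c|^2 $.
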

\begin{proof}
Suppose $ a, c \in \mathcal{O}_d $ with $ c = x_c + y_c \omega $ and $ a = x_a + y_a \omega $.
Further, we assume $ \omega + \overline{\omega} = t $ and $ \omega \overline{\omega} = n $.
Then, take
\[
q = \left\lfloor \frac{x_a x_c + t x_a y_c + n y_a y_c}{|c|^2} \right\rfloor +
\left\lfloor \frac{x_c y_a - x_a y_c}{|c|^2} \right\rfloor \omega = \lfloor x' \rfloor + \lfloor y' \rfloor \omega
\]
and simply take $ r = a - qc $.
It remains to show that $ \frac{r}{c} \in D $.

First,
\[ \begin{split}
\frac{r}{c} = \frac{a}{c} - q &= \frac{ac}{c \overline{c}} - q \\
&= \frac{(x_a + y_a \omega)(x_c + y_c \overline{\omega})}{|c|^2} - q \\
&= \frac{x_a x_c + y_a x_c \omega + x_a y_c \overline{\omega} + y_a y_c \omega \overline{\omega}}{|c|^2} - q \\
&= \frac{x_a x_c + y_a x_c \omega + t x_a y_c - x_a y_c \omega + n y_a y_c}{|c|^2} - q \\
&= x' + y'\omega - \lfloor x' \rfloor - \lfloor y' \rfloor \omega \\
&= (x' - \lfloor x' \rfloor) + (y' - \lfloor y \rfloor) \omega
\end{split} \]
and by definition of $ \lfloor x' \rfloor $, we know $ 0 \leq x' - \lfloor x' \rfloor < 1 $.
Similarly, $ 0 \leq y' - \lfloor y' \rfloor < 1 $ so $ \frac{r}{c} \in D $.
\end{proof}

% Bibliography

\medskip
\begin{flushleft}
\textsc{Tanner Reese \\
School of Mathematical and Statistical Sciences, Arizona State University}

\texttt{treese5@asu.edu}
\end{flushleft}

\end{document}